\newfont{\rams}{msbm10 scaled\magstep1}
\newcommand{\rio}{\mathbb{R}}
\newenvironment{resumeT}{\begin{list}{}{\setlength{\rightmargin}{\leftmargin}}\item[]
{\centering {\bf \it~~~}
\par}\item[]\ignorespaces}{\unskip\end{list}}
\newtheorem{defn}{Definition}[section]
\newtheorem{corollary}{Corollary}[section]
\newtheorem{prop}{Proposition}[section]
\newtheorem{theorem}{Theorem}[section]
\begin{document}

\title{\Large
    \textbf{Dealing with Interaction Between Bipolar Multiple\\
  Criteria Preferences in PROMETHEE Methods}
  }  
  
\author{ \hspace{0,1cm} Salvatore Corrente \thanks{Department of Economics and Business, University of Catania, Corso Italia 55, 95129  Catania, Italy, e-mails: \texttt{salvatore.corrente\string@unict.it}, \texttt{salgreco\string@unict.it}},
Jos\'e Rui Figueira \footnote{CEG-IST, Instituto Superior T\'{e}cnico, Technical University of Lisbon, Av. Rovisco Pais, 1049-001 Lisboa, Portugal, e-mail: \texttt{figueira@ist.utl.pt}}, \hspace{0,1cm} Salvatore Greco $^*$}


\date{}
\maketitle 


\addcontentsline{toc}{section}{Abstract}


\vspace{-1,5cm}

\begin{resumeT}

\textbf{Abstract:} \noindent In this paper we extend the PROMETHEE methods to the case of interacting criteria on a bipolar scale, introducing the bipolar PROMETHEE method based on the bipolar Choquet integral. In order to elicit parameters compatible with preference information provided by the Decision Maker (DM), we propose to apply the Robust Ordinal Regression (ROR). ROR takes into account simultaneously all the sets of parameters compatible with the preference information provided by the DM considering a necessary and a possible preference relation. 

{\bf Keywords}: {PROMETHEE methods, Interaction between criteria, Bipolar Choquet integral.}
\end{resumeT}

 \pagenumbering{arabic}

\vspace{0,5cm}
\section{Introduction}
In many decision making problems (for a survey on Multiple Criteria Decision Analysis (MCDA) see \cite{FigGreEhr}), alternatives are evaluated with respect to a set of criteria being not mutually preferentially independent (see \cite{wakker1989additive}). In fact, in most cases, the criteria present a certain form of positive (synergy) or negative (redundancy) interaction. For example, if one likes sport cars, maximum speed and acceleration are very important criteria. However, since in general speedy cars have also a good acceleration, giving a high weight to both criteria can over evaluate some cars. Thus, it seems reasonable to give maximum speed and acceleration considered together a weight smaller than the sum of the two weights assigned to these criteria when considered separately. In this case we have a redundancy between the criteria of maximum speed and acceleration. On the contrary, we have a synergy effect between maximum speed and price because, in general, speedy cars are also expensive and, therefore, a car which is good on both criteria is very appreciated. In this case, it seems reasonable to give maximum speed and price considered together a weight greater than the sum of the two weights assigned to these criteria when considered separately. In these cases, the aggregation of the evaluations is done by using non-additive integrals the most known of which are the Choquet integral \cite{Choquet53} and the Sugeno integral \cite{sugeno1974theory} (for a comprehensive survey on the use of non-additive integrals in MCDA see \cite{Grabisch1996,Grabisch_book_greco,Grabisch2008}).\\
In many cases, we have also to take into account that the importance of criteria may also depend on the criteria which are opposed to them. For example, a bad evaluation on aesthetics reduces the importance of maximum speed. Thus, the weight of maximum speed should be reduced when there is a negative evaluation on aesthetics. In this case, we have an antagonism effect between maximum speed and aesthetics. \\
Those types of interactions between criteria have been already taken into consideration in the ELECTRE methods \cite{FGR}. In this paper, we deal with the same problem using the bipolar Choquet integral \cite{GL1,GL2} applied to the PROMETHEE I and II methods \cite{BransMa04,BransVi85}.
 
This article extends the short paper published by the authors in \cite{Corrente2012a} with respect to which we added the description of the bipolar PROMETHEE I method, the proofs of all theorems presented in \cite{Corrente2012a} and a didactic example in which we apply the bipolar PROMETHEE methods and the Robust Ordinal Regression (ROR) \cite{greco2010robust} being a family of MCDA methods taking into account simultaneously all the sets of preference parameters compatible with the preference information provided by the Decision Maker (DM) using a necessary and a possible preference relation. \\ 
The paper is organized as follows. In the next section we recall the basic concepts of the classical PROMETHEE methods; in section 3 we introduce the bipolar PROMETHEE methods; the elicitation of preference information permitting to fix the value of the preference parameters of the model (essentially the bicapacities of the bipolar Choquet integral) is presented in section 4; in the fifth section we apply the ROR to the bipolar PROMETHEE methods; a didactic example is presented in section 6 while the last section provides some conclusions and lines for future research.

\section{The classical PROMETHEE methods}\label{PROM}
Let us consider a set of actions or alternatives $A=\left\{a,b,c,\ldots\right\}$ evaluated with respect to a set of criteria $G=\left\{g_{1},\ldots,g_{n}\right\}$, where $g_{j}:A\rightarrow\rio$, $j\in{\cal J}=\left\{1,\ldots,n\right\}$ and $|A|=m$. PROMETHEE \cite{BransMa04,BransVi85} is a well-known family of MCDA methods, among which the most known are PROMETHEE I and II, that aggregate preference information of a DM through an outranking relation. Considering for each criterion $g_{j}$ a weight $w_{j}$ (representing the importance of criterion $g_{j}$ within the family of criteria $G$), an indifference threshold $q_{j}$ (being the largest difference $d_{j}(a,b)=g_{j}(a)-g_{j}(b)$ compatible with the indifference between alternatives $a$ and $b$), and a preference threshold $p_{j}$ (being the minimum difference $d_{j}(a,b)$ compatible with the preference of $a$ over $b$), PROMETHEE methods (from now on, when we shall speak of PROMETHEE methods, we shall refer to PROMETHEE I and II) build a non decreasing function $P_{j}(a,b)$ of $d_{j}(a,b)$, whose formulation (see \cite{BransMa04} for other formulations) can be stated as follows
$$
P_{j}(a,b)=
\left\{
\begin{array}{lll}
0 & \mbox{if} & d_{j}(a,b)\leq q_{j} \\
\frac{d_{j}(a,b)-q_j}{p_{j}-q_{j}} & \mbox{if} & q_{j}<d_{j}(a,b)<p_{j} \\
1 & \mbox{if} & d_{j}(a,b)\geq p_{j} 
\end{array}
\right.
$$
\noindent The greater the value of $P_{j}(a,b)$, the greater the preference of $a$ over $b$ on criterion $g_j$. For each ordered pair of alternatives $(a,b)\in A\times A,$ PROMETHEE methods compute the value $\pi(a,b)=\sum_{j\in{\cal J}}w_{j}P_{j}(a,b)$ 
representing how much alternative $a$ is preferred to alternative $b$ taking into account the whole set of criteria. It can assume values between $0$ and $1$ and obviously the greater the value of $\pi(a,b)$, the greater the preference of $a$ over $b$. \\
In order to compare an alternative $a$ with all the other alternatives of the set $A$, PROMETHEE methods compute the negative and the positive net flow of $a$ in the following way:
$$
\phi^{-}(a)=\frac{1}{m-1}\sum_{b\in A\setminus\left\{a\right\}}\pi(b,a) \;\;\;\;\;\;\mbox{and}\;\;\;\;\;\;  \phi^{+}(a)=\frac{1}{m-1}\sum_{b\in A\setminus\left\{a\right\}}\pi(a,b).
$$
\noindent These flows represent how much the alternatives of $A\setminus\left\{a\right\}$ are preferred to $a$ and how much $a$ is preferred to the alternatives of $A\setminus\left\{a\right\}$. For each alternative $a\in A$, PROMETHEE II computes also the net flow $\phi(a)=\phi^{+}(a)-\phi^{-}(a)$.  On the basis of the positive and the negative flows, PROMETHEE I provides a partial ranking on the set of alternatives $A$, building a preference (${\cal P}^{I}$), an indifference (${\cal I}^{I}$) and an incomparability (${\cal R}^{I}$) relation. In particular: 

$$
\left\{
\begin{array}{lll}
a{\cal P}^I b & \mbox{iff} & \left\{\begin{array}{l} \Phi^{+}(a)\geq\Phi^{+}(b),\\ \Phi^{-}(a)\leq\Phi^{-}(b),\\ \Phi^{+}(a)-\Phi^{-}(a)>\Phi^{+}(b)-\Phi^{-}(b)\end{array}  \right.\\[3mm]
a{\cal I}^I b & \mbox{iff} & \left\{\begin{array}{l} \Phi^{+}(a)=\Phi^{+}(b),\\ \Phi^{-}(a)=\Phi^{-}(b)\end{array}  \right.\\[3mm]
a{\cal R}^{I}b & \mbox{otherwise} & \\
\end{array}
\right.
$$

On the basis instead of the net flows, the PROMETHEE II method provides a complete ranking on the set of alternatives $A$ defining, in a natural way, a preference (${\cal P}^{II}$) and an indifference (${\cal I}^{II}$) relation for which $a{\cal P}^{II}b$ iff $\Phi(a)>\Phi(b)$ while $a{\cal I}^{II}b$ iff $\Phi(a)=\Phi(b)$.\

\section{The bipolar PROMETHEE methods}
In order to extend the classical PROMETHEE methods to the bipolar framework, we define for each criterion $g_j$, $j\in{\cal J}$, the bipolar preference function $P_j^B: A \times A \rightarrow [-1,1], j\in {\cal J}$ in the following way: 

 \begin{equation}\label{equat}
  P_{j}^{B}(a, b) =P_j(a,b) - P_j(b,a)=
 \left\{
 \begin{array}{lll}
  P_j(a,b) & \mbox{if} & P_j(a,b) > 0 \\[2mm]
  - P_j(b,a) & \mbox{if} & P_j(a,b) = 0
 \end{array}
 \right.
 \end{equation}

\noindent It is straightforward proving that $P_{j}^{B}(a,b)=-P_{j}^{B}(b,a)$ for all $j\in{\cal J}$ and for all pairs $(a,b)\in A\times A.$

In this section we propose to aggregate the bipolar vector $P^{B}(a,b)=\left[P_{1}^{B}(a,b),\ldots,P_{n}^{B}(a,b)\right]$ through the bipolar Choquet integral. \\
The bipolar Choquet integral is based on a bicapacity \cite{GL1,GL2}, being a function $\hat\mu: P({\cal J})\rightarrow[-1,1]$, where $P({\cal J})=\left\{(C,D): C,D\subseteq {\cal J} \mbox{ and } C \cap D=\emptyset \right\}$, such that
\begin{itemize}
	\item $\hat\mu(\emptyset, {\cal J})=-1, \hat\mu({\cal J},\emptyset)=1,$ $\hat\mu(\emptyset, \emptyset)=0$ (boundary conditions),
	\item for all $(C,D),(E,F)\in P({\cal J})$, if $C\subseteq E$ and $D\supseteq F$, then $\hat\mu(C,D)\leq\hat\mu(E,F)$ (monotonicity condition). 
\end{itemize}

\noindent According to \cite{GF2003,GrecoMatarazzoSlowinski02}, we consider the following expression for a bicapacity $\hat\mu$:

\begin{equation}\label{bicapacityFG}
\hat\mu(C,D)=\mu^{+}(C,D)-\mu^{-}(C,D), \mbox{\;\;\;for all } (C,D)\in P({\cal J})
\end{equation}

\noindent where $\mu^{+},\mu^{-}:P({\cal J})\rightarrow\left[0,1\right]$ such that:

\begin{equation}\label{bound_plus}
\mu^{+}({\cal J},\emptyset)=1, \;\;\;\;\;\; \mu^{+}(\emptyset,B)=0, \;\forall B\subseteq{\cal J},
\end{equation}

\begin{equation}\label{bound_minus}
\mu^{-}(\emptyset,{\cal J})=1, \;\;\;\;\;\; \mu^{-}(B,\emptyset)=0, \;\forall B\subseteq{\cal J},
\end{equation}

\begin{equation}\label{M_miu_p}
\left.
\begin{array}{l}
\mu^+(C,D) \leq \mu^+(C\cup\left\{j\right\},D),\;\;\;\; \forall (C\cup\left\{j\right\},D)\in P({\cal J}), \;\forall j\in{\cal J},\\
\mu^+(C,D) \geq \mu^+(C,D\cup\left\{j\right\}),\;\;\;\; \forall (C,D\cup\left\{j\right\})\in P({\cal J}), \;\forall j\in{\cal J}\\
\end{array}
\right\}
\end{equation}

\begin{equation}\label{M_miu_m}
\left.
\begin{array}{l}
\mu^-(C,D) \leq \mu^-(C,D\cup\left\{j\right\}),\;\;\;\; \forall (C,D\cup\left\{j\right\})\in P({\cal J}), \;\forall j\in{\cal J},\\
\mu^-(C,D) \geq \mu^-(C\cup\left\{j\right\},D),\;\;\;\; \forall (C\cup\left\{j\right\},D)\in P({\cal J}), \;\forall j\in{\cal J}\\
\end{array}
\right\}
\end{equation}

\noindent Let us observe that (\ref{M_miu_p}) are equivalent to the constraint

$$ \mu^+(C,D)\leq\mu^+(E,F), \;\;\;\mbox{for all} \;\;\;(C,D), (E,F)\in P({\cal J})\;\;\; \mbox{such that} \;\;\;C\subseteq E \;\;\mbox{and}\;\;D\supseteq F,$$

\noindent while (\ref{M_miu_m}) are equivalent to the constraint

$$ \mu^-(C,D)\leq\mu^-(E,F), \;\;\;\mbox{for all} \;\;\;(C,D), (E,F)\in P({\cal J})\;\;\; \mbox{such that} \;\;\;C\supseteq E \;\;\mbox{and}\;\;D\subseteq F.$$



The interpretation of the functions $\mu^{+}$ and $\mu^{-}$ is the following. Given the pair $(a,b)\in A\times A$, let us consider $(C,D)\in P({\cal J})$ where $C$ is the set of criteria expressing a preference of $a$ over $b$ and $D$ the set of criteria expressing a preference of $b$ over $a$. In this situation, $\mu^{+}(C,D)$ represents the importance of criteria from $C$ when criteria from $D$ are opposing them, and $\mu^{-}(C,D)$ represents the importance of criteria from $D$ opposing $C$. Consequently, $\hat\mu(C,D)$ represents the balance of the importance of $C$ supporting $a$ and $D$ supporting $b$.

Given $(a,b)\in A\times A$, the bipolar Choquet integral of $P^B(a,b)$ with respect to the bicapacity $\hat\mu$ can be written as follows

$$Ch^{B}(P^{B}(a, b), \hat{\mu})=\int_0^1\hat\mu(\{j\in{\cal J}:P_{j}^{B}(a,b)>t\},\{j\in{\cal J}:P_{j}^{B}(a,b)<-t\})dt,$$

\noindent while the bipolar comprehensive positive preference of $a$ over $b$ and the comprehensive negative preference of $a$ over $b$ with respect to the bicapacity $\hat\mu$ are respectively:

$$Ch^{B+}(P^{B}(a, b), \hat{\mu})=\int_0^1\mu^{+}(\{j\in{\cal J}:P_{j}^{B}(a,b)>t\},\{j\in{\cal J}:P_{j}^{B}(a,b)<-t\})dt,$$

$$Ch^{B-}(P^{B}(a, b), \hat{\mu})=\int_0^1\mu^{-}(\{j\in{\cal J}:P_{j}^{B}(a,b)>t\},\{j\in{\cal J}:P_{j}^{B}(a,b)<-t\})dt,$$

\noindent where $\mu^{+}$ and $\mu^{-}$ have been defined before.

From an operational point of view, the bipolar aggregation of $P^{B}(a,b)$ can be computed as follows:
for all the criteria $j \in \cal{J}$, the absolute values of these
preferences should be re-ordered in a non-decreasing way, as follows: $\vert P_{(1)}^{B}(a,b)  \vert \leq \vert P_{(2)}^{B}(a,b) \vert \leq
 \ldots \leq \vert P_{(j)}^{B}(a,b) \vert \leq \ldots \leq \vert P_{(n)}^{B}(a,b) \vert$.

\noindent The bipolar Choquet integral of $P^B(a,b)$ with respect to the bicapacity $\hat\mu$ can now be determined:

 \begin{equation}\label{bipolarcomprehensive}
 Ch^{B}(P^{B}(a, b), \hat{\mu}) = \sum_{j \in {\cal{J}}^{>}} \vert P_{(j)}^{B}(a, b) \vert
 \Big[ \hat{\mu}\left(C_{(j)}(a,b), D_{(j)}(a,b)\right)  - \hat{\mu}\left(C_{(j+1)}(a,b), D_{(j+1)}(a,b)\right) \Big]
 \end{equation}

\noindent where $P^B(a, b) = \Big[P_{j}^{B}(a,b), \; j \in{\cal{J}}\Big]$, ${\cal{J}}^{>} = \{ j \in {\cal{J}} \; : \; \vert P_{(j)}^{B}(a,b) \vert > 0\}$, $C_{(j)}(a,b) = \{ i \in {\cal{J}}^{>} \; : \; P_{i}^{B}(a,b) \geq \vert P_{(j)}^{B}(a, b) \vert
\}$, $D_{(j)}(a,b) = \{ i \in {\cal{J}}^{>} \; : \; - P_{i}^{B}(a,b) \geq \vert P_{(j)}^{B}(a, b) \vert\}$ and $C_{(n+1)}(a,b)=D_{(n+1)}(a,b)=\emptyset$.\\
We give also the following definitions:

 \begin{equation}\label{bipolarpositive}
 Ch^{B+}(P^{B}(a, b), {\mu}^{+}) = \sum_{j \in {\cal{J}}^{>}} \vert P_{(j)}^{B}(a, b) \vert
 \Big[ {\mu}^{+}\left(C_{(j)}(a,b), D_{(j)}(a,b)\right)  - {\mu}^{+}\left(C_{(j+1)}(a,b), D_{(j+1)}(a,b)\right) \Big],
 \end{equation}

 \begin{equation}\label{bipolarnegative}
 Ch^{B-}(P^{B}(a, b), {\mu}^{-}) = \sum_{j \in {\cal{J}}^{>}} \vert P_{(j)}^{B}(a, b) \vert
 \Big[ {\mu}^{-}\left(C_{(j)}(a,b), D_{(j)}(a,b)\right)  - {\mu}^{-}\left(C_{(j+1)}(a,b), D_{(j+1)}(a,b)\right) \Big].
 \end{equation}

\noindent $Ch^B(P^B(a,b), \hat{\mu})$ gives the comprehensive
preference of $a$ over $b$ and it is equivalent to $\pi(a,b) -
\pi(b,a) =  P^C(a,b)$ in the classical PROMETHEE method while $Ch^{B+}(P^B(a,b), \mu^+)$ and $Ch^{B-}(P^B(a,b), \mu^-)$ give, respectively, how much $a$ outranks $b$ (considering the reasons in favor of $a$) and how much $a$ is outranked by $b$ (considering the reasons against $a$).\\
From the definitions above, it is straightforward proving that, for all $a,b\in A,$ 

\begin{equation}\label{bipolarpref}
Ch^{B}(P^{B}(a, b), \hat{\mu})=Ch^{B+}(P^{B}(a, b), {\mu}^{+})-Ch^{B-}(P^{B}(a, b), {\mu}^{-})
\end{equation}

\noindent Using equations (\ref{bipolarcomprehensive}), (\ref{bipolarpositive}) and (\ref{bipolarnegative}), we can define for each alternative $a\in A$ the bipolar positive flow, the bipolar negative flow and the bipolar net flow as follows:

\begin{equation}\label{pos_flow}
 {\phi}^{B+}(a) = \frac{1}{m-1} \sum_{b \in A\setminus\left\{a\right\}}Ch^{B+}(P^{B}(a,b), {\mu}^{+})
\end{equation}

\begin{equation}\label{neg_flow}
 {\phi}^{B-}(a) = \frac{1}{m-1} \sum_{b \in A\setminus\left\{a\right\}}Ch^{B-}(P^{B}(a,b), {\mu}^{-})
\end{equation}

\begin{equation}\label{net_flow}
 {\phi}^{B}(a) = \frac{1}{m-1} \sum_{b \in A\setminus\left\{a\right\}}Ch^{B}(P^{B}(a,b), \hat{\mu})
\end{equation}

\noindent By equation (\ref{bipolarpref}), it follows that ${\phi}^{B}(a)={\phi}^{B+}(a)-{\phi}^{B-}(a)$ for each $a\in A$.

Analogously to the classical PROMETHEE I and II methods, using the positive, the negative and the net bipolar flows we propose the bipolar PROMETHEE I and the bipolar PROMETHEE II methods. Given a pair of alternatives $(a,b)\in A\times A$, the bipolar PROMETHEE I method defines a partial order on the set of alternatives $A$ considering a preference (${\cal P}_{B}^I$), an indifference (${\cal I}_{B}^I$) and an incomparability (${\cal R}_{B}^I$) relation defined as follows:

$$
\left\{
\begin{array}{lll}
a{\cal P}_{B}^I b & \mbox{iff} & \left\{\begin{array}{l} \Phi^{B+}(a)\geq\Phi^{B+}(b),\\ \Phi^{B-}(a)\leq\Phi^{B-}(b),\\ \Phi^{B+}(a)-\Phi^{B-}(a)>\Phi^{B+}(b)-\Phi^{B-}(b)\end{array}  \right.\\[3mm]
a{\cal I}_B^I b & \mbox{iff} & \left\{\begin{array}{l} \Phi^{B+}(a)=\Phi^{B+}(b),\\ \Phi^{B-}(a)=\Phi^{B-}(b)\end{array}  \right.\\[3mm]
a{\cal R}_B^{I}b & \mbox{otherwise} & \\
\end{array}
\right.
$$

\noindent Given a pair of alternatives $(a,b)\in A\times A$, the bipolar PROMETHEE II method provides, instead, a complete order on the set of alternatives $A$, defining the a preference (${\cal P}_{B}^{II}$) and an indifference $({\cal I}_{B}^{II})$ relations as follows: $aP_{B}^{II}b$ iff $\Phi^{B}(a)>\Phi(b)$, while $aI_{B}^{II}b$ iff $\Phi^{B}(a)=\Phi^{B}(b)$.

\subsection{Symmetry conditions}
Because $Ch^B(P^B(a,b), \hat{\mu})$ is equivalent to $\pi(a,b) - \pi(b,a) =  P^C(a,b)$ in the classical PROMETHEE method, it is reasonable expecting that, for all $a,b\in A$, $Ch^B(P^B(a,b), \hat{\mu})=-Ch^B(P^B(b,a),\hat{\mu})$. The following Proposition gives conditions to satisfy such a requirement: 

\begin{prop}\label{Sym_1}\textit{$Ch^{B}(P^{B}(a, b),\hat{\mu}) =
-Ch^{B}(P^{B}(b, a),\hat{\mu})$ ~~ for all possible ~~ $a, b$, ~~ iff ~~
$\hat{\mu}(C, D) = - \hat{\mu}(D, C)$ ~~  for each ~~ $(C, D) \in
P({\cal{J}})$}.
\end{prop}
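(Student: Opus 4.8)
The plan is to prove both implications by exploiting the single structural fact noted right after equation (\ref{equat}), namely $P_{j}^{B}(a,b)=-P_{j}^{B}(b,a)$ for every $j\in{\cal J}$ and every pair $(a,b)$. The consequence I would use throughout is that passing from $(a,b)$ to $(b,a)$ leaves each absolute value $|P_{j}^{B}(a,b)|$ unchanged but interchanges the two supporting sets at every level: for each threshold $t\in[0,1]$,
\[
\{j\in{\cal J}:P_{j}^{B}(b,a)>t\}=\{j\in{\cal J}:P_{j}^{B}(a,b)<-t\},\qquad \{j\in{\cal J}:P_{j}^{B}(b,a)<-t\}=\{j\in{\cal J}:P_{j}^{B}(a,b)>t\}.
\]
I would work with the integral representation of $Ch^{B}$ stated just before equation (\ref{bipolarcomprehensive}), since it makes this interchange transparent; the same argument can be run on the discrete sum using $C_{(j)}(b,a)=D_{(j)}(a,b)$ and $D_{(j)}(b,a)=C_{(j)}(a,b)$ if one prefers.

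For the sufficiency direction, writing $A_{t}^{+}=\{j:P_{j}^{B}(a,b)>t\}$ and $A_{t}^{-}=\{j:P_{j}^{B}(a,b)<-t\}$, the interchange above gives
\[
Ch^{B}(P^{B}(b,a),\hat\mu)=\int_{0}^{1}\hat\mu\big(A_{t}^{-},A_{t}^{+}\big)\,dt.
\]
Assuming $\hat\mu(C,D)=-\hat\mu(D,C)$ for all $(C,D)\in P({\cal J})$ turns the integrand into $-\hat\mu(A_{t}^{+},A_{t}^{-})$, and pulling the sign out of the integral yields $Ch^{B}(P^{B}(b,a),\hat\mu)=-Ch^{B}(P^{B}(a,b),\hat\mu)$, as required.

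The necessity direction is the delicate part, and the main obstacle is to recover the value of $\hat\mu$ on an arbitrary $(C,D)$ from an identity that only constrains the aggregated quantity $Ch^{B}$. The idea is to isolate a single bicapacity value by a suitable choice of alternatives. Fix $(C,D)\in P({\cal J})$ and choose a pair $(a,b)$ — always realizable, since each $g_{j}$ may be chosen independently of the others — with $P_{j}^{B}(a,b)=\alpha$ for $j\in C$, $P_{j}^{B}(a,b)=-\alpha$ for $j\in D$, and $P_{j}^{B}(a,b)=0$ otherwise, for some fixed $\alpha\in(0,1]$. For this profile every nonzero absolute preference equals $\alpha$, so the two supporting sets in the integral representation are constantly equal to $C$ and $D$ on $[0,\alpha)$ and both empty on $[\alpha,1]$; hence $Ch^{B}(P^{B}(a,b),\hat\mu)=\alpha\,\hat\mu(C,D)$, and symmetrically $Ch^{B}(P^{B}(b,a),\hat\mu)=\alpha\,\hat\mu(D,C)$. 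The assumed symmetry then gives $\alpha\,\hat\mu(C,D)=-\alpha\,\hat\mu(D,C)$, and dividing by $\alpha>0$ produces $\hat\mu(C,D)=-\hat\mu(D,C)$. Since $(C,D)$ was arbitrary, the antisymmetry of $\hat\mu$ follows, which completes the proof. The one point deserving care is verifying the telescoping claim that this profile really contributes only $\alpha\,\hat\mu(C,D)$: in the discrete form (\ref{bipolarcomprehensive}) all intermediate differences cancel because $C_{(j)}(a,b)=C$ and $D_{(j)}(a,b)=D$ for every $j\in{\cal J}^{>}$, leaving only the top term $\alpha\big[\hat\mu(C,D)-\hat\mu(\emptyset,\emptyset)\big]$ via the convention $C_{(n+1)}=D_{(n+1)}=\emptyset$.
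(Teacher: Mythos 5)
Your proof is correct and takes essentially the same route as the paper's: sufficiency via the observation that passing from $(a,b)$ to $(b,a)$ swaps the positive and negative supporting sets (the paper states this as $C_{(j)}(a,b)=D_{(j)}(b,a)$ and $D_{(j)}(a,b)=C_{(j)}(b,a)$ in the discrete sum), and necessity via a test profile concentrated on $(C,D)$ that reduces $Ch^{B}$ to a single bicapacity value (the paper takes your $\alpha=1$). The use of the integral representation and the general $\alpha\in(0,1]$ are only cosmetic variations.
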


\begin{proof}
Let us prove that if  $\hat{\mu}(C,D)=
-\hat{\mu}(D,C)$ for each $(C, D) \in
P({\cal{J}})$, then $Ch^B(P^B(a,b), \hat{\mu})= -
Ch^B(P^B(b,a), \hat{\mu})$. As noticed, $P_{j}^{B}(a,b)= -
P_{j}^{B}(b,a)$ for all $j \in \cal{J}$, and consequently $\vert P_{(j)}^{B}(a,b) \vert = \vert -P_{(j)}^{B}(b,a) \vert = \vert
  P_{(j)}^{B}(b,a) \vert$ for all $j\in{\cal J}.$

\noindent From this, it follows that:

\begin{itemize}
\item[$(\alpha)$] ${\displaystyle C_{(j)}(a,b) = \{ i \in {\cal{J}}^{>} \; : \; P_{i}^{B}(a,b) \geq  \vert
P_{(j)}^{B}(a,b) \vert  \} = \{ i \in {\cal{J}}^{>} \; : \;
-P_{i}^{B}(b,a) \geq \vert P_{(j)}^{B}(b,a) \vert \} = }$ \\
${\displaystyle = D_{(j)}(b,a)}$;
\item[$(\beta)$] ${\displaystyle D_{(j)}(a,b) = \{ i \in {\cal{J}}^{>} \; : \; - P_{i}^{B}(a,b) \geq  \vert
P_{(j)}^{B}(a,b) \vert  \} = \{ i \in {\cal{J}}^{>} \; : \;
P_{i}^{B}(b,a) \geq \vert P_{(j)}^{B}(b,a) \vert \}=}$ \\
${\displaystyle = C_{(j)}(b,a)}$.
\end{itemize}

From $(\alpha)$ and $(\beta)$ we have that

\begin{itemize}
\item[$(\gamma)$] ${\displaystyle Ch^B(P^B(a,b), \hat{\mu}) =}$ \\
=${\displaystyle \sum_{j \in {\cal{J}}^{>}} \vert P_{(j)}^{B}(a,b)
\vert \Big[\hat{\mu}(C_{(j)}(a,b), D_{(j)}(a,b)) -
\hat{\mu}(C_{(j+1)}(a,b), D_{(j+1)}(a,b))\Big] = }$ \\
${\displaystyle = \sum_{j \in {\cal{J}}^{>}} \vert
P_{(j)}^{B}(b,a) \vert \Big[\hat{\mu}(D_{(j)}(b,a), C_{(j)}(b,a))
- \hat{\mu}(D_{(j+1)}(b,a), C_{(j+1)}(b,a)) \Big]}$.
\end{itemize}

\noindent Since $\hat{\mu}(C,D) =- \hat{\mu}(D,C),\; \forall (C,D)\in P({\cal J})$, from $(\gamma)$ we
have that,

\begin{itemize}
\item[$(\delta)$] ${\displaystyle Ch^B(P^B(b,a), \hat{\mu}) =}$ \\
${\displaystyle = \sum_{j \in {\cal{J}}^{>}} \vert
P_{(j)}^{B}(b,a)
\vert \Big[\hat{\mu}(C_{(j)}(b,a), D_{(j)}(b,a)) - \hat{\mu}(C_{(j+1)}(b,a), D_{(j+1)}(b,a))\Big] = }$ \\
${\displaystyle = \sum_{j \in {\cal{J}}^{>}} \vert
P_{(j)}^{B}(b,a) \vert \Big[ - \hat{\mu}(D_{(j)}(b,a),
C_{(j)}(b,a)) + \hat{\mu}(D_{(j+1)}(b,a), C_{(j+1)}(b,a)) \Big]}$ \\
${\displaystyle = -Ch^B(P^B(a, b), \hat{\mu})}.$
\end{itemize}

Let us now prove that if $Ch^B(P^B(a,b), \hat{\mu})= -
Ch^B(P^B(b,a), \hat{\mu})$, then $\hat{\mu}(C,D) = -
\hat{\mu}(D,C)$. Let us consider the pair $(a,b)$ such that,

\begin{equation}
P_{j}^{B}(a,b)=
\left\{
\begin{array}{lll}
1  & \mbox{if} & j\in C\\
-1 & \mbox{if} & j\in D\\
0  &           & \mbox{otherwise} \\
\end{array}
\right.
\end{equation}

In this case we have that $Ch^B(P^B(a,b), \hat{\mu})=
\hat{\mu}(C,D)$ and $Ch^B(P^B(b,a), \hat{\mu})= \hat{\mu}(D,C)$.
Thus if $Ch^B(P^B(a,b), \hat{\mu})=-Ch^B(P^B(b,a), \hat{\mu})$,
by $(iv)$ we obtain that $\hat{\mu}(C,D) = - \hat{\mu}(D,C)$ and
the proof is concluded. 
\end{proof}

\noindent Analogously, because $Ch^{B+}(P^B(a,b),\mu^+)$ represents how much $a$ outranks $b$ and $Ch^{B-}(P^B(b,a),\mu^-)$ represents how much $b$ is outranked by $a$, it is reasonable expecting that $Ch^{B+}(P^B(a,b),\mu^+)$=$Ch^{B-}(P^B(b,a),\mu^-)$. Sufficient and necessary conditions to get this equality are given by the following Proposition.

\begin{prop}\label{Salvo} 
\noindent \textit{$Ch^{B+}(P^{B}(a, b),{\mu}^{+}) =
Ch^{B-}(P^{B}(b, a),{\mu}^{-})$ ~~ for all possible ~~ $a, b$, ~~ iff ~~
${\mu}^{+}(C, D) = {\mu}^{-}(D, C)$ ~~  for each ~~ $(C, D) \in
P({\cal{J}})$}.
\end{prop}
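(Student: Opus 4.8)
The plan is to mirror the proof of Proposition~\ref{Sym_1}, exploiting the fact that the index-set identities established there are purely combinatorial and do not depend on which set function is used to aggregate. Concretely, the identities $(\alpha)$ and $(\beta)$, namely $C_{(j)}(a,b)=D_{(j)}(b,a)$ and $D_{(j)}(a,b)=C_{(j)}(b,a)$, together with $\vert P_{(j)}^{B}(a,b)\vert=\vert P_{(j)}^{B}(b,a)\vert$, were derived solely from $P_{j}^{B}(a,b)=-P_{j}^{B}(b,a)$ and the definitions of the sets $C_{(j)}$, $D_{(j)}$; hence I may reuse them verbatim. As before, I would split the proof into the sufficiency ($\Leftarrow$) and necessity ($\Rightarrow$) parts. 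The crucial difference with Proposition~\ref{Sym_1} is that no sign change occurs, since both $Ch^{B+}$ and $Ch^{B-}$ carry the same additive structure in (\ref{bipolarpositive}) and (\ref{bipolarnegative}); the hypothesis $\mu^{+}(C,D)=\mu^{-}(D,C)$ will convert one into the other without introducing a minus sign.

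For sufficiency, I would start from the expansion (\ref{bipolarpositive}) of $Ch^{B+}(P^{B}(a,b),\mu^{+})$ and substitute the identities $(\alpha)$, $(\beta)$ and the absolute-value equality to rewrite every term over the pair $(b,a)$, obtaining a sum whose generic bracket is $\mu^{+}(D_{(j)}(b,a),C_{(j)}(b,a))-\mu^{+}(D_{(j+1)}(b,a),C_{(j+1)}(b,a))$. Applying the assumed relation with the roles of the two arguments interchanged, i.e. $\mu^{+}(D_{(j)}(b,a),C_{(j)}(b,a))=\mu^{-}(C_{(j)}(b,a),D_{(j)}(b,a))$, turns this expression into exactly the defining sum (\ref{bipolarnegative}) of $Ch^{B-}(P^{B}(b,a),\mu^{-})$, which is the claimed equality.

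For necessity, I would test the assumed equality on the canonical pair $(a,b)$ used in Proposition~\ref{Sym_1}, where $P_{j}^{B}(a,b)=1$ for $j\in C$, $P_{j}^{B}(a,b)=-1$ for $j\in D$, and $P_{j}^{B}(a,b)=0$ otherwise. Since all nonzero preferences have modulus $1$, one has $C_{(j)}(a,b)=C$ and $D_{(j)}(a,b)=D$ for every $j\in{\cal J}^{>}$, so the telescoping sum (\ref{bipolarpositive}) collapses to $\mu^{+}(C,D)-\mu^{+}(\emptyset,\emptyset)$; invoking the boundary condition $\mu^{+}(\emptyset,\emptyset)=0$ (which follows from $\mu^{+}(\emptyset,B)=0$ in (\ref{bound_plus})) gives $Ch^{B+}(P^{B}(a,b),\mu^{+})=\mu^{+}(C,D)$. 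Reasoning symmetrically on $(b,a)$, for which the positive and negative criteria swap roles, yields $Ch^{B-}(P^{B}(b,a),\mu^{-})=\mu^{-}(D,C)$ using $\mu^{-}(\emptyset,\emptyset)=0$ from (\ref{bound_minus}). The hypothesised equality then forces $\mu^{+}(C,D)=\mu^{-}(D,C)$, and since $(C,D)\in P({\cal J})$ was arbitrary the proof is complete.

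The only genuinely delicate point I anticipate is the bookkeeping in the necessity direction: one must check that the telescoping really terminates at $\mu^{+}(\emptyset,\emptyset)$ and $\mu^{-}(\emptyset,\emptyset)$ (via the convention $C_{(n+1)}=D_{(n+1)}=\emptyset$) and that both vanish by the respective boundary conditions, otherwise a spurious constant would survive. Everything else is a direct transcription of the argument for Proposition~\ref{Sym_1}, with the single substitution of the relation $\mu^{+}(C,D)=\mu^{-}(D,C)$ in place of $\hat\mu(C,D)=-\hat\mu(D,C)$ and the attendant disappearance of the minus sign.
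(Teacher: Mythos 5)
Your proposal is correct and is precisely the argument the paper intends: its proof of Proposition~\ref{Salvo} consists of the single line ``Analogous to Proposition~\ref{Sym_1},'' and you have carried out that analogy faithfully, reusing the identities $(\alpha)$, $(\beta)$ and the canonical $\pm 1$ test pair, with the substitution $\mu^{+}(C,D)=\mu^{-}(D,C)$ replacing $\hat\mu(C,D)=-\hat\mu(D,C)$ and the boundary conditions $\mu^{+}(\emptyset,\emptyset)=\mu^{-}(\emptyset,\emptyset)=0$ closing the telescoping sum. No gaps beyond those already implicit in the paper's own treatment.
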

\begin{proof}
Analogous to Proposition \ref{Sym_1}.
\end{proof}

\noindent Reminding equation (\ref{bipolarpref}), the Corollary follows.

\begin{corollary}
\textit{$Ch^{B}(P^{B}(a, b),\hat{\mu}) =-
Ch^{B}(P^{B}(b, a),\hat{\mu})$ ~~ for all possible ~~ $a, b$, ~~ if ~~
${\mu}^{+}(C, D) = {\mu}^{-}(D, C)$ ~~  for each ~~ $(C, D) \in
P({\cal{J}})$}.
\end{corollary}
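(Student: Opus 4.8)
The plan is to derive this directly from Proposition~\ref{Salvo} together with the decomposition~(\ref{bipolarpref}), so that no manipulation of the Choquet-integral formula~(\ref{bipolarcomprehensive}) is needed. First I would assume the hypothesis $\mu^{+}(C,D)=\mu^{-}(D,C)$ for every $(C,D)\in P({\cal J})$ and invoke Proposition~\ref{Salvo}, which under exactly this condition yields $Ch^{B+}(P^{B}(a,b),\mu^{+})=Ch^{B-}(P^{B}(b,a),\mu^{-})$ for all pairs $a,b$. The key observation is that this proposition holds ``for all possible $a,b$'', so it may be read in both orientations: for the pair $(a,b)$ it gives $Ch^{B+}(P^{B}(a,b),\mu^{+})=Ch^{B-}(P^{B}(b,a),\mu^{-})$, and for the swapped pair $(b,a)$ it gives $Ch^{B+}(P^{B}(b,a),\mu^{+})=Ch^{B-}(P^{B}(a,b),\mu^{-})$.

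Next I would expand both members of the target identity via~(\ref{bipolarpref}). Writing $Ch^{B}(P^{B}(a,b),\hat{\mu})=Ch^{B+}(P^{B}(a,b),\mu^{+})-Ch^{B-}(P^{B}(a,b),\mu^{-})$ and $-Ch^{B}(P^{B}(b,a),\hat{\mu})=-Ch^{B+}(P^{B}(b,a),\mu^{+})+Ch^{B-}(P^{B}(b,a),\mu^{-})$, I would substitute the two equalities above into the second expression: replace $Ch^{B-}(P^{B}(b,a),\mu^{-})$ by $Ch^{B+}(P^{B}(a,b),\mu^{+})$ and $Ch^{B+}(P^{B}(b,a),\mu^{+})$ by $Ch^{B-}(P^{B}(a,b),\mu^{-})$. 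This turns $-Ch^{B}(P^{B}(b,a),\hat{\mu})$ into $Ch^{B+}(P^{B}(a,b),\mu^{+})-Ch^{B-}(P^{B}(a,b),\mu^{-})$, which is precisely $Ch^{B}(P^{B}(a,b),\hat{\mu})$, concluding the argument. An equivalent route, should one prefer to stay at the bicapacity level, is to use~(\ref{bicapacityFG}) to check that the hypothesis forces $\hat{\mu}(D,C)=\mu^{+}(D,C)-\mu^{-}(D,C)=\mu^{-}(C,D)-\mu^{+}(C,D)=-\hat{\mu}(C,D)$ and then apply Proposition~\ref{Sym_1}.

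There is no real obstacle here: the statement is a one-line consequence of the two facts it assembles, and the only point needing a moment's care is to apply Proposition~\ref{Salvo} in both directions rather than only once, which is legitimate because it is stated universally over pairs. I would finally emphasize why the corollary asserts only sufficiency (``if'') and not a characterization, in contrast to Propositions~\ref{Sym_1} and~\ref{Salvo}: the antisymmetry $\hat{\mu}(C,D)=-\hat{\mu}(D,C)$ amounts only to the pointwise identity $\mu^{+}(C,D)+\mu^{+}(D,C)=\mu^{-}(C,D)+\mu^{-}(D,C)$, which is strictly weaker than $\mu^{+}(C,D)=\mu^{-}(D,C)$, so the converse implication genuinely fails and the corollary correctly claims only one direction.
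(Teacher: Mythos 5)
Your proposal is correct and follows essentially the same path as the paper, which likewise derives the corollary in two ways: from Proposition~\ref{Salvo} combined with equation~(\ref{bipolarpref}), and alternatively by checking that the hypothesis forces $\hat{\mu}(C,D)=-\hat{\mu}(D,C)$ and invoking Proposition~\ref{Sym_1}. Your write-up is in fact slightly more careful than the paper's, since you make explicit that Proposition~\ref{Salvo} must be applied to both orientations $(a,b)$ and $(b,a)$ before substituting into~(\ref{bipolarpref}), and your closing remark on why only sufficiency holds is a correct and worthwhile observation.
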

\begin{proof}
This can be seen as a Corollary both of Proposition \ref{Sym_1} and Proposition \ref{Salvo}. In fact, 
\begin{itemize}
\item ${\mu}^{+}(C, D) = {\mu}^{-}(D, C)$ for each $(C, D) \in
P({\cal{J}})$ implies that $\hat\mu(C,D)=-\hat\mu(D,C)$ for each $(C, D) \in P({\cal{J}})$, and by Proposition \ref{Sym_1}, it follows the thesis.
\item ${\mu}^{+}(C, D) = {\mu}^{-}(D, C)$  for each $(C, D) \in
P({\cal{J}})$ implies that $Ch^{B+}(P^{B}(a, b),{\mu}^{+}) =
Ch^{B-}(P^{B}(b, a),{\mu}^{-})$ (by Proposition \ref{Salvo}) and from this it follows obviously the thesis by equation (\ref{bipolarpref}). 
\end{itemize}
\end{proof}

\subsection{The 2-additive decomposable bipolar PROMETHEE methods}
\noindent As seen in the previous section, the use of the bipolar Choquet integral is based on a
bicapacity which assigns numerical values to each element
$P({\cal{J}})$. Let us remark that the number of elements of
$P({\cal{J}})$ is $3^{n}$. This means that the definition of a
bicapacity requires a rather huge and unpractical number of
parameters. Moreover, the interpretation of these parameters is
not always simple for the DM. Therefore, the use of the bipolar
Choquet integral in real-world decision-making problems requires
some methodology to assist the DM in assessing the preference
parameters (bicapacities).  Several studies dealing with the determination of the
relative importance of criteria were proposed in MCDA (see e.g. \cite{RoyMousseau}). The question of
the interaction between criteria was also studied in the context
of MAUT methods \cite{MarichalRo00}. \\
In the following we consider
only the $2$-additive bicapacities \cite{GL1,fujimoto2004new}, being a particular class of bicapacities.

\subsection{Defining a manageable and meaningful bicapacity measure}

\noindent According to \cite{GF2003}, we give the following decomposition of the functions $\mu^{+}$ and $\mu^{-}$ previously defined:

\begin{defn}\label{mupiumeno}
\hspace{0,1cm}
\begin{itemize}
\item ${\displaystyle \mu^{+}(C, D) = \sum_{j \in C}a^{+}(\{ j\}, \emptyset) +  \sum_{\{j,k \} \subseteq C}a^{+}(\{j, k\}, \emptyset) +
  \sum_{j \in C, \; k \in D}a^{+}(\{ j\}, \{ k \}) }$
\item ${\displaystyle \mu^{-}(C, D) = \sum_{j \in D}a^{-}(\emptyset, \{ j\}) +  \sum_{\{j,k \} \subseteq D}a^{-}(\emptyset, \{j, k\}) +
  \sum_{j \in C, \; k \in D}a^{-}(\{ j\}, \{ k \}) }$
\end{itemize}
\end{defn}

\noindent The interpretation of each $a^{\pm}(\cdot)$ is the
following:

\begin{itemize}
\item $a^+(\{ j\}, \emptyset)$, represents the power of criterion $g_j$ by itself; this value is always non negative;
\item $a^+(\{j, k\}, \emptyset)$, represents the interaction between $g_j$ and $g_k$,
when they are in favor of the preference of $a$ over $b$; when its
value is zero there is no interaction; on the contrary, when the
value is positive there is a synergy effect when putting together
$g_j$ and $g_k$; a negative value means that the two criteria are
redundant;
\item $a^+(\{ j\}, \{ k \})$, represents the power of criterion $g_k$ against criterion $g_j$, when criterion
$g_j$ is in favor of $a$ over $b$ and $g_k$ is against to the preference of $a$
over $b$; this leads always to a reduction or no effect on the
value of $\mu^+$ since this value is always non-positive.
\end{itemize}

 An analogous interpretation can be applied to the values $a^-(\emptyset, \{
j\})$, $a^-(\emptyset, \{j, k\})$, and $a^-(\{ j \}, \{ k \})$.

 In what follows, for the sake of simplicity, we will use
$a_{j}^+$, $a_{jk}^+$, $a_{j \vert k}^+$ instead of $a^+(\{ j\},
\emptyset)$, $a^+(\{j, k\}, \emptyset)$ and $a^+(\{ j\}, \{
k \})$, respectively and $a_{j}^-$, $a_{jk}^-$, $a_{j \vert
k}^-$ instead of $a^-(\emptyset, \{ j\})$, $a^-(\emptyset, \{j,
k\})$ and $a^-(\{ j \}, \{ k \})$, respectively.\\
In this way, the bicapacity $\hat\mu$, decomposed using $\mu^+$ and $\mu^-$ of Definition \ref{mupiumeno}, has the following expression:

\begin{eqnarray*}
\hat{\mu}(C,D) & = & {\mu}^{+}(C,D)-{\mu}^{-}(C,D)=\\
 & = & \sum_{j \in C}a^{+}_{j} - 
\sum_{j \in D}a^{-}_{j} + \sum_{\{j, k \} \subseteq C}a^{+}_{jk} -
\sum_{\{j, k \} \subseteq D}a^{-}_{jk} + \sum_{j \in C, \; k \in
D}a^{+}_{j \vert k} - \sum_{j \in C, \; k \in
D}a^{-}_{j \vert k}
\end{eqnarray*}

\noindent We call such a bicapacity $\hat\mu$, a \textit{ 2-additive decomposable bicapacity }. (An analogous decomposition has been proposed directly for $\hat\mu$ without considering $\mu^{+}$ and $\mu^{-}$ in \cite{Muro}). \\
Considering these decompositions for the functions $\mu^+$ and $\mu^-$, the monotonicity conditions (\ref{M_miu_p}), (\ref{M_miu_m}) and the boundary conditions (\ref{bound_plus}), (\ref{bound_minus}) have to be expressed in function of the parameters $a_j^{+}$, $a_{jk}^{+}$, $a_{j\vert k}^{+}$, $a_j^{-}$, $a_{jk}^{-}$, $a_{j\vert k}^{-}$ as follows:

\noindent {\bf{Monotonicity conditions}}
\begin{enumerate}
\item[1)] $\mu^{+}(C, D) \leq \mu^{+}(C \cup \{ j \}, D)$, $\; \; \forall \;
j \in {\cal{J}}, \; \forall  (C \cup \{ j \}, D) \in P({\cal{J}})$
 \[
  {\Leftrightarrow\displaystyle a^{+}_{j} + \sum_{k \in C}a^{+}_{jk} + \sum_{k \in D}a^{+}_{j \vert
  k} \geq 0, \; \; \forall \; j \in {\cal{J}}, \; \forall  (C \cup \{ j \}, D) \in
  P({\cal{J}})} 
 \]
\item[2)] $\mu^{+}(C, D) \geq \mu^{+}(C, D \cup \{ j \}), \; \; \forall \;
j \in {\cal{J}}, \; \forall  (C, D \cup \{ j \}) \in P({\cal{J}})$
\[ 
  {\Leftrightarrow\displaystyle  \sum_{k \in C}a^{+}_{k \vert
  j} \leq 0, \; \; \forall \; j \in {\cal{J}}, \; \forall  (C, D \cup \{ j \}) \in
  P({\cal{J}})}
 \]
being already satisfied because $a^{+}_{k \vert j}\leq 0$, $\forall k,j\in {\cal J}, k\neq j.$ \\
\end{enumerate}

\begin{enumerate}
\item[3)] $\mu^{-}(C, D) \leq \mu^{-}(C, D \cup \{ j \})$, $\; \; \forall \;
j \in {\cal{J}}, \; \forall  (C , D\cup \{ j \}) \in P({\cal{J}})$
 \[  
  {\Leftrightarrow\displaystyle a^{-}_{j} + \sum_{k \in D}a^{-}_{jk} + \sum_{k \in C}a^{-}_{k \vert
  j} \geq 0, \; \; \forall \; j \in {\cal{J}}, \; \forall  (C , D\cup \{ j \}) \in
  P({\cal{J}})}
 \]
\item[4)] $\mu^{-}(C, D) \geq \mu^{-}(C\cup \{ j \}, D ), \; \; \forall \;
j \in {\cal{J}}, \; \forall  (C\cup \{ j \}, D ) \in P({\cal{J}})$
\[  
  {\Leftrightarrow\displaystyle  \sum_{k \in D}a^{-}_{j \vert
  k} \leq 0, \; \; \forall \; j \in {\cal{J}}, \; \forall  (C\cup \{ j \}, D ) \in
  P({\cal{J}})} 
 \]
being already satisfied because $a^{-}_{j \vert k}\leq 0$, $\forall j,k\in {\cal J}, j\neq k.$ \\
\end{enumerate}

\noindent Conditions $1)$, $2)$, $3)$ and $4)$ ensure the
monotonicity of the bi-capacity, $\hat{\mu}$, on $\cal{J}$,
obtained as the difference of $\mu^+$ and
$\mu^-$, that is,

\[
\forall \; \; (C,D), \; (E, F) \; \in \; P({\cal{J}}) \; \; \;
\mbox{such that} \; \; \; C \supseteq E, \; D \subseteq F, \; \;
\hat{\mu}(C,D) \geq \hat{\mu}(E,F).
\]

\noindent {\bf{Boundary conditions}}
\begin{enumerate}
\item $\mu^{+}({\cal{J}}, \emptyset) = 1$, i.e., ${\displaystyle \sum_{j \in {\cal{J}}}a_{j}^{+} +
\sum_{\{j, k \} \subseteq {\cal{J}}}a_{jk}^{+} = 1}$
\item $\mu^{-}(\emptyset, {\cal{J}}) = 1$, i.e., ${\displaystyle \sum_{j \in {\cal{J}}}a_{j}^{-} +
\sum_{ \{j, k \} \subseteq {\cal{J}}}a_{jk}^{-} = 1}$
\end{enumerate}

\vspace{0.3cm}

\subsection{The $2$-additive bipolar Choquet integral}

\noindent The following theorem gives an expression of $ Ch^{B+}(x, {\mu}^{+})$ and $Ch^{B-}(x, {\mu}^{-})$ considering a 2-additive decomposable bicapacity $\mu$.

\vspace{0.3cm}

\begin{theorem}\label{Bi-polar_Choq} 
{\it Given a 2-additive decomposable bicapacity $\hat{\mu}$, then for all $x \in {\rio }^n$
\begin{enumerate}

\item ${\displaystyle Ch^{B+}(x, {\mu}^{+}) = \sum_{j \in {\cal{J}}, x_j> 0}a^{+}_{j}x_j + \sum_{j, k \in {\cal{J}}, j \neq k, x_j, x_k > 0}a^{+}_{jk}\min\{x_j,
 x_k\}+ \sum_{j, k \in {\cal{J}}, j\neq k, x_j > 0, x_k < 0}a^{+}_{j \vert k}\min\{x_j, - x_k\} }
$

\item ${\displaystyle Ch^{B-}(x, {\mu}^{-}) = -\sum_{j \in {\cal{J}}, x_j< 0}a^{-}_{j}x_j - \sum_{j, k \in {\cal{J}}, j \neq k, x_j, x_k < 0}a^{-}_{jk}\max\{x_j,
 x_k\}- \sum_{j, k \in {\cal{J}}, j\neq k, x_j > 0, x_k < 0}a^{-}_{j \vert k}\max\{-x_j, x_k\} }$
\end{enumerate}
}
\end{theorem}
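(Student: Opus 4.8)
The plan is to prove both identities from the horizontal (integral) representation of the bipolar Choquet integral rather than by telescoping the sum (\ref{bipolarpositive})--(\ref{bipolarnegative}), since the integral form linearizes cleanly against the decomposition of Definition \ref{mupiumeno}. For a fixed $x\in\rio^n$ I would write $C_t=\{j\in{\cal J}:x_j>t\}$ and $D_t=\{j\in{\cal J}:x_j<-t\}$, so that
$$Ch^{B+}(x,{\mu}^{+})=\int_0^{\infty}\mu^{+}(C_t,D_t)\,dt,$$
the integrand vanishing once $t\geq\max_j x_j$ because $\mu^{+}(\emptyset,D)=0$ by (\ref{bound_plus}). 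This is exactly the representation introduced earlier for $Ch^{B+}$, extended from $[-1,1]^n$ to all of $\rio^n$; equivalently, one checks it coincides with (\ref{bipolarpositive}) by evaluating the integral layer by layer.

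Next I would substitute $\mu^{+}(C_t,D_t)=\sum_{j\in C_t}a_j^{+}+\sum_{\{j,k\}\subseteq C_t}a_{jk}^{+}+\sum_{j\in C_t,\,k\in D_t}a_{j\vert k}^{+}$ into the integral and exchange the (finite) sums with the integral. Each resulting term reduces to an elementary length computation: $\int_0^{\infty}\mathbf{1}[x_j>t]\,dt=\max\{x_j,0\}$; $\int_0^{\infty}\mathbf{1}[x_j>t,\,x_k>t]\,dt=\max\{\min\{x_j,x_k\},0\}$; and $\int_0^{\infty}\mathbf{1}[x_j>t,\,-x_k>t]\,dt=\max\{\min\{x_j,-x_k\},0\}$. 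Reading off when each length is nonzero fixes the sign conditions in the statement: the single index survives only for $x_j>0$, an unordered pair $\{j,k\}$ survives only when $x_j,x_k>0$, and the mixed term $a_{j\vert k}^{+}$ survives only when $x_j>0$ and $x_k<0$; on those regions the three lengths equal $x_j$, $\min\{x_j,x_k\}$ and $\min\{x_j,-x_k\}$ respectively. This yields part~1 directly.

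Part~2 follows the identical scheme with the decomposition of $\mu^{-}$, where $D_t$ now plays the role of $C_t$. Here $\int_0^{\infty}\mathbf{1}[x_k<-t]\,dt=\max\{-x_k,0\}$ (nonzero exactly when $x_k<0$) and $\int_0^{\infty}\mathbf{1}[x_j<-t,\,x_k<-t]\,dt=\max\{-\max\{x_j,x_k\},0\}$ (nonzero when $x_j,x_k<0$), while the cross term $\sum_{j\in C_t,\,k\in D_t}a_{j\vert k}^{-}$ contributes the same length $\max\{\min\{x_j,-x_k\},0\}$ as in part~1. The only extra work is to present these nonnegative lengths in the signs displayed in the statement: all three minus signs arise purely from the rewritings $\max\{-x_k,0\}=-x_k$, $\max\{-\max\{x_j,x_k\},0\}=-\max\{x_j,x_k\}$, and $\min\{x_j,-x_k\}=-\max\{-x_j,x_k\}$ valid on the respective sign regions.

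I do not expect a genuine obstacle: once the integral representation is in hand everything is linear and each integral is a one-line length. The points demanding care are purely bookkeeping. First, one must justify the integral representation on all of $\rio^n$ (or, if one insists on staying discrete, telescope (\ref{bipolarpositive}) and (\ref{bipolarnegative}) term by term, which is considerably more laborious). Second, one must match summation conventions: the pairwise contributions arise naturally over \emph{unordered} pairs $\{j,k\}\subseteq C_t$ (resp.\ $\subseteq D_t$), so the sums $\sum_{j,k,\,j\neq k}$ in the statement are to be read with that convention, whereas the cross terms $a_{j\vert k}^{\pm}$ are genuinely asymmetric in $j$ and $k$ and raise no such ambiguity.
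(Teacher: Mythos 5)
Your proof is correct, but it takes a genuinely different route from the paper's. The paper works entirely with the discrete defining formula (\ref{bipolarpositive}): it substitutes the decomposition of Definition \ref{mupiumeno} into the telescoping sum $\sum_{j}\vert x_{(j)}\vert\bigl[\mu^{+}(C_{(j)},D_{(j)})-\mu^{+}(C_{(j+1)},D_{(j+1)})\bigr]$, computes each difference explicitly (its remarks $a)$--$c)$, identifying which coefficients $a^{+}_{k}$, $a^{+}_{hk}$, $a^{+}_{h\vert k}$ ``enter'' exactly at level $\vert x_{(j)}\vert$), and then regroups. You instead pass to the layer-cake representation $\int_0^{\infty}\mu^{+}(C_t,D_t)\,dt$, push the integral through the finite sum, and reduce everything to one-line length computations such as $\int_0^{\infty}\mathbf{1}[x_j>t,\,-x_k>t]\,dt=\max\{\min\{x_j,-x_k\},0\}$. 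Your computations for both parts check out, including the sign bookkeeping in part~2, and your reading of the pairwise sums as running over unordered pairs is the correct one (it is forced by Definition \ref{mupiumeno}). Your route is arguably cleaner and less error-prone than the paper's combinatorial bookkeeping (whose write-up contains a stray $a^{-}_{h\vert k}$ where $a^{+}_{h\vert k}$ is meant, and a remark $b)$ about $a^{-}_{k}$ that plays no role in part~1), but it buys this at the cost of one preliminary step that you rightly flag and must actually carry out: the integral form of $Ch^{B+}$ is introduced in the paper only for vectors in $[-1,1]^n$ with $\int_0^1$, while the theorem is stated for $x\in\rio^n$ and formula (\ref{bipolarpositive}) is the operative definition; the bridge is a short Abel summation over the intervals $[\vert x_{(j)}\vert,\vert x_{(j+1)}\vert)$, on which $C_t$ and $D_t$ are constant and equal to $C_{(j+1)}$ and $D_{(j+1)}$. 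Once that equivalence is written down, the rest of your argument is complete.
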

\begin{proof} 
We shall prove only part 1. Proof of part 2. can be obtained analogously.\\
If the bicapacity $\hat\mu$ is
$2-$additive decomposable, then

\[
\begin{array}{ll}
{\displaystyle Ch^{B+}(x, {\mu}^{+})} &  {\displaystyle = \sum_{j \in
{\cal{J}}^>}\vert x_{(j)}\vert \big[{\mu}^{+}(C_{(j)}, D_{(j)}) -
{\mu}^{+}(C_{(j+1)}, D_{(j+1)}) \Big] =} \\ [6mm]
 & \\
 & {\displaystyle = \sum_{j \in {\cal{J}}^>}\vert x_{(j)} \vert \Big[\Big( \sum_{k \in {\cal{J}}^>, x_k
 \geq \vert x_{(j)} \vert} a^{+}_{k} - \sum_{k \in {\cal{J}}^>, x_k \geq
 \vert x_{(j+1)} \vert} a^{+}_{k}\Big) + } \\ [6mm]
 & {\displaystyle + \Big(\sum_{h, k \in {\cal{J}}^>, h \neq k, x_h, x_k \geq \vert x_{(j)} \vert} a^{+}_{hk} -
  \sum_{h, k \in {\cal{J}}^>, h \neq k, x_h, x_k \geq \vert x_{(j+1)} \vert} a^{+}_{hk}\Big) + } \\ [6mm]
 & {\displaystyle +\Big(\sum_{h, k \in {\cal{J}}^>, h\neq k, x_h, -x_k \geq \vert x_{(j)} \vert} a^{+}_{h \vert k} -
 \sum_{h, k \in {\cal{J}}^>, h\neq k, x_h, -x_k \geq \vert x_{(j+1)} \vert} a^{-}_{h \vert k} \Big)\Big] } \\ [6mm]
 \end{array}
\]

Let us remark that,

\[
a) \; \; \; \; \; \Big( \sum_{k \in {\cal{J}}^>, x_k
 \geq \vert x_{(j)} \vert} a^{+}_{k} - \sum_{k \in {\cal{J}}^>, x_k
 \geq \vert x_{(j+1)} \vert} a^{+}_{k}\Big) =
 \left\{
 \begin{array}{lll}
 \displaystyle\sum_{k\in{\cal J}^{>}, x_{k}=|x_{(j)}|} a^{+}_{k} & \mbox{if} & |x_{(j)}| < |x_{(j+1)}| \\[10mm]
 0 &  & \mbox{otherwise}
 \end{array}
 \right.
\]

\[
b) \; \; \; \; \; \Big( \sum_{k \in {\cal{J}}^>, -x_k
 \geq \vert x_{(j)} \vert} a^{-}_{k} - \sum_{k \in {\cal{J}}^>, -x_k
 \geq \vert x_{(j+1)} \vert} a^{-}_{k}\Big) =
 \left\{
 \begin{array}{lll}
 \displaystyle\sum_{k\in{\cal J}^{>}, -x_{k}=|x_{(j)}|} a^{-}_{k} & \mbox{if} & |x_{(j)}| < |x_{(j+1)}| \\[10mm]
 0 &  & \mbox{otherwise}
 \end{array}
 \right.
\]

\[
c) \; \; \; \; \; \Big( \sum_{\substack{h,k \in {\cal{J}}^>, h\neq k, \\x_h,x_k \geq \vert x_{(j)} \vert }} a^{+}_{hk} - \sum_{\substack{h,k \in {\cal{J}}^>, h\neq k, \\x_h,x_k \geq \vert x_{(j+1)} \vert }}
a^{+}_{hk} \Big) =
 \left\{
 \begin{array}{lll}
{\displaystyle \sum_{\substack{h,k \in {\cal{J}}^>, h\neq k, \\ \min\{x_h,x_k\} = \vert x_{(j)} \vert }}} a^{+}_{hk} & \mbox{if} & |x_{(j)}| < |x_{(j+1)}| \\[10mm]
 0 &  & \mbox{otherwise}
 \end{array}
 \right.
\]

\noindent Considering $a)-c)$ we get that:

\[
\begin{array}{ll}
\chi) & {\displaystyle = \sum_{\substack{j \in {\cal{J}}^>,\\|x_{(j)}|<|x_{(j+1)}|}}\vert x_{(j)}
\vert \Big[  \displaystyle\sum_{k\in{\cal J}^{>}, x_{k}=|x_{(j)}|} a^{+}_{k} + \sum_{\substack{h,k \in {\cal{J}}^>, h\neq k, \\ \min\{x_h,x_k\} = \vert x_{(j)} \vert }} a^{+}_{hk} + \sum_{\substack{h,k \in {\cal{J}}^>, h\neq k, \\\min\{x_h,-x_k\} = \vert x_{(j)} \vert }} a^{+}_{h \vert k} \Big]}\\ [6mm]
\end{array}
\]
\noindent and from this it follows the thesis.
\end{proof}

In the following, we provide the symmetry conditions of Propositions \ref{Sym_1} and \ref{Salvo} in function of the parameters $a_j^{+}$, $a_j^{-}$, $a_{jk}^{+}$, $a_{jk}^{-}$, $a_{j\vert k}^{+}$ and $a_{j\vert k}^{-}$.

\begin{prop}\label{Prop_cond}
{\it Given a 2-additive decomposable bicapacity $\hat\mu$, then $\hat{\mu}(C,D)= - \hat{\mu}(D,C)$ for each
$(C,D) \in P({\cal{J}})$ iff
 \begin{enumerate}
 \item for each $j \in {\cal{J}}$, $a^{+}_{j} = a^{-}_{j}$,
 \item for each $\{j,k\} \subseteq {\cal{J}}$, $a^{+}_{jk} = a^{-}_{jk}$,
 \item for each $ j, k \in {\cal{J}}$, $j \neq k$, $a^{+}_{j \vert k} - a^{-}_{j \vert k} = a^{-}_{k \vert j} - a^{+}_{k \vert j}$.
 \end{enumerate}
 }
\end{prop}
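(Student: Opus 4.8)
The plan is to prove both implications at once by computing $\hat\mu(C,D)+\hat\mu(D,C)$ explicitly from the $2$-additive decomposition and noting that the requirement $\hat\mu(C,D)=-\hat\mu(D,C)$ is the same as asking this sum to vanish for \emph{every} $(C,D)\in P({\cal J})$. First I would write down $\hat\mu(C,D)$ from the displayed decomposition and then its image under the swap $(C,D)\mapsto(D,C)$, and add the two. Collecting terms by type, the singleton contributions combine into $\sum_{j\in C\cup D}(a^+_j-a^-_j)$ and the pair contributions into $\sum_{\{j,k\}\subseteq C}(a^+_{jk}-a^-_{jk})+\sum_{\{j,k\}\subseteq D}(a^+_{jk}-a^-_{jk})$.

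The one manipulation requiring care is the antagonism (interaction-against) terms. The sum $\sum_{j\in C,\,k\in D}a^+_{j\vert k}$ produced by $\hat\mu(C,D)$ and the sum $\sum_{j\in D,\,k\in C}a^+_{j\vert k}$ produced by $\hat\mu(D,C)$ run over the disjoint oriented index sets $C\times D$ and $D\times C$; relabelling the dummy indices $j\leftrightarrow k$ in the second turns it into $\sum_{j\in C,\,k\in D}a^+_{k\vert j}$, so the two merge into $\sum_{j\in C,\,k\in D}(a^+_{j\vert k}+a^+_{k\vert j})$, and identically for the $a^-$ terms. This yields
\[
\hat\mu(C,D)+\hat\mu(D,C)=\sum_{j\in C\cup D}(a^+_j-a^-_j)+\sum_{\{j,k\}\subseteq C}(a^+_{jk}-a^-_{jk})+\sum_{\{j,k\}\subseteq D}(a^+_{jk}-a^-_{jk})+\sum_{j\in C,\,k\in D}\big[(a^+_{j\vert k}-a^-_{j\vert k})+(a^+_{k\vert j}-a^-_{k\vert j})\big].
\]
For sufficiency ($\Leftarrow$), under conditions 1--3 each of these three families of terms is identically zero: the first by $a^+_j=a^-_j$, the second by $a^+_{jk}=a^-_{jk}$, and the third because condition 3 is exactly $(a^+_{j\vert k}-a^-_{j\vert k})+(a^+_{k\vert j}-a^-_{k\vert j})=0$. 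Hence the sum vanishes for all $(C,D)$.

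For necessity ($\Rightarrow$) I would recover the three conditions by evaluating the identity on minimal test pairs. Taking $(C,D)=(\{j\},\emptyset)$ annihilates all but the singleton term and gives $a^+_j=a^-_j$, which is 1. Taking $(C,D)=(\{j,k\},\emptyset)$ and using 1 to cancel the two singleton contributions leaves $a^+_{jk}-a^-_{jk}=0$, which is 2. Finally $(C,D)=(\{j\},\{k\})$ with $j\neq k$, again using 1 to remove the singleton terms, leaves precisely $(a^+_{j\vert k}-a^-_{j\vert k})+(a^+_{k\vert j}-a^-_{k\vert j})=0$, which is 3. The only genuinely delicate point in the argument is the reindexing of the antagonism terms described above: one must observe that the oriented contributions on $C\times D$ and $D\times C$ recombine into the symmetric pair $a^+_{j\vert k}+a^+_{k\vert j}$, which is what forces the non-separable form of condition 3 rather than separate equalities for $a^+_{j\vert k}$ and $a^-_{j\vert k}$. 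Everything else is routine bookkeeping.
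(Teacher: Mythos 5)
Your proof is correct and follows essentially the same route as the paper: the necessity direction uses exactly the same minimal test pairs $(\{j\},\emptyset)$, $(\{j,k\},\emptyset)$ and $(\{j\},\{k\})$, peeling off conditions 1--3 in the same order. The only difference is that you explicitly carry out the general computation of $\hat\mu(C,D)+\hat\mu(D,C)$ (including the careful reindexing of the antagonism terms over $C\times D$ and $D\times C$) to establish sufficiency, a step the paper dismisses as ``straightforward''; your version is therefore slightly more complete than the published one.
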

\begin{proof}
First, let us prove that

\begin{itemize}
\item[$(a)$] $\hat{\mu}(C,D) = - \hat{\mu}(D,C)$
\end{itemize}

\noindent implies $1.$, $2.$ and $3.$ For each $j \in \cal{J}$,

\begin{itemize}
\item[$(b)$] $\hat{\mu}(\{j\}, \emptyset)= a^{+}_{j}$  and  $\hat{\mu}(\emptyset,\{j\})= -a^{-}_{j}$
\end{itemize}

\noindent From $(a)$ and $(b)$ we have,

\[
a^{+}_{j}= \hat{\mu}(\{j\}, \emptyset )= - \hat{\mu}(\emptyset
,\{j\})=a^{-}_{j}
\]

\noindent which is $1$.

For each $\{ j,k \} \subseteq \cal{J}$ we have that,
\begin{itemize}
\item[$(c)$]  $\hat{\mu}(\{j,k\}, \emptyset) = a^{+}_{j} + a^{+}_{k} + a^{+}_{jk}$ and
$\hat{\mu}(\emptyset, \{j,k\}) = -a^{-}_{j} - a^{-}_{k} -
a^{-}_{jk}$
\end{itemize}

Being $\hat{\mu}(\{j,k\}, \emptyset)=-\hat{\mu}(\emptyset,\{j,k\})$, and being $a^{+}_{j} = a^{-}_{j}$ and
$a^{+}_{k} = a^{-}_{k}$ by $1.$, we have that for each $\{j,k\} \subseteq \cal{J}$, $a^{+}_{jk}= a^{-}_{jk}$, i.e. $2$.

For all $j,k\in {\cal J}$ with $j\neq k,$ we have:\\
$$\hat{\mu}(\{j\},\{k\})=a_{j}^{+}-a_{k}^{-}+a^{+}_{j \vert k}-a^{-}_{j \vert k}$$
$$\hat{\mu}(\{k\},\{j\})=a_{k}^{+}-a_{j}^{-}+a^{+}_{k \vert j}-a^{-}_{k \vert j}$$
\noindent Being $\hat{\mu}(\{j\},\{k\})=-\hat{\mu}(\{k\},\{j\})$ and having proved that $a^{+}_{j}=a^{-}_{j}, \forall j$, we obtain that 
$a^{+}_{j \vert k}-a^{-}_{j \vert k}=-a^{+}_{k \vert j}+a^{-}_{k \vert j}$ i.e. 3.

\vspace{0.3cm}

It is straightforward to prove that $1.$, $2.$, and $3.$ imply $\hat\mu(C,D)=-\hat\mu(D,C)$.

\end{proof}

\begin{corollary}\label{lem_SYM}
Given a 2-additive decomposable bicapacity $\hat\mu$, $Ch^{B}(P^{B}(a,b),\hat\mu)=-Ch^{B}(P^{B}(b,a),\hat\mu)$ for all $a,b\in A$ iff 
\begin{enumerate}
\item for each $j \in {\cal{J}}$, $a^{+}_{j} = a^{-}_{j}$,
\item for each $\{j,k\} \subseteq {\cal{J}}$, $a^{+}_{jk} = a^{-}_{jk}$,
\item for each $ j, k \in {\cal{J}}$, $j \neq k$, $a^{+}_{j \vert k} - a^{-}_{j \vert k} = a^{-}_{k \vert j} - a^{+}_{k \vert j}$. 
\end{enumerate}
\end{corollary}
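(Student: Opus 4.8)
The plan is to obtain this Corollary as an immediate consequence of chaining together the two equivalences already established, namely Proposition \ref{Sym_1} and Proposition \ref{Prop_cond}. Since a 2-additive decomposable bicapacity is, in particular, a bicapacity, Proposition \ref{Sym_1} applies verbatim and supplies the first link in the chain; no fresh computation with the integral formula \eqref{bipolarcomprehensive} is required.

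First I would invoke Proposition \ref{Sym_1}, which states that $Ch^{B}(P^{B}(a,b),\hat\mu)=-Ch^{B}(P^{B}(b,a),\hat\mu)$ holds for all $a,b\in A$ if and only if $\hat\mu(C,D)=-\hat\mu(D,C)$ for every $(C,D)\in P({\cal J})$. This reduces the claim about the bipolar net flows (equivalently, about the bipolar Choquet integral) to a purely combinatorial antisymmetry condition on the bicapacity itself, carrying along both directions of the biconditional.

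Next I would invoke Proposition \ref{Prop_cond} which, precisely under the hypothesis that $\hat\mu$ is 2-additive decomposable, establishes that $\hat\mu(C,D)=-\hat\mu(D,C)$ for every $(C,D)\in P({\cal J})$ is in turn equivalent to the three parameter conditions 1., 2.\ and 3. Composing the two biconditionals by transitivity of ``iff'' then yields exactly the stated equivalence, and the proof is complete.

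The only point to check, rather than a genuine obstacle, is that the hypotheses of the two propositions dovetail: Proposition \ref{Sym_1} requires merely that $\hat\mu$ be a bicapacity, which the 2-additive decomposable $\hat\mu$ certainly is, whereas Proposition \ref{Prop_cond} demands the stronger 2-additive decomposability assumption, which is exactly the standing hypothesis of the present Corollary. Because both directions of each biconditional are used, I would phrase the argument as an honest chain of equivalences rather than a one-directional implication, so that the ``if and only if'' in the conclusion is fully justified.
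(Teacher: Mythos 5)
Your proposal is correct and is essentially identical to the paper's own proof, which simply states that the Corollary follows by Propositions \ref{Prop_cond} and \ref{Sym_1}; you merely spell out the chaining of the two biconditionals that the authors leave implicit. The extra care you take in checking that the hypotheses dovetail is sound but adds nothing beyond what the paper intends.
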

\begin{proof}
It follows by Propositions \ref{Prop_cond} and \ref{Sym_1}.
\end{proof}

\begin{prop}\label{Prop_cond_2}
{\it Given a 2-additive decomposable bicapacity $\hat\mu$, then ${\mu}^{+}(C,D)= {\mu}^{-}(D,C)$ for each
$(C,D) \in P({\cal{J}})$ iff
 \begin{enumerate}
 \item for each $j \in {\cal{J}}$, $a^{+}_{j} = a^{-}_{j}$,
 \item for each $\{j,k\} \subseteq {\cal{J}}$, $a^{+}_{jk} = a^{-}_{jk}$,
 \item for each $ j, k \in {\cal{J}}$, $j \neq k$, $a^{+}_{j \vert k} = a^{-}_{k \vert j}$.
 \end{enumerate}
 }
\end{prop}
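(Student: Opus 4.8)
The plan is to follow the same two-part strategy used in the proof of Proposition~\ref{Prop_cond}, working directly from the decompositions of $\mu^+$ and $\mu^-$ in Definition~\ref{mupiumeno}. The essential preliminary observation is that, writing out $\mu^-(D,C)$ by swapping the roles of the two arguments in the formula for $\mu^-$, one obtains
$$\mu^-(D,C)=\sum_{j\in C}a^-_j+\sum_{\{j,k\}\subseteq C}a^-_{jk}+\sum_{j\in D,\,k\in C}a^-_{j\vert k},$$
so that $\mu^+(C,D)$ and $\mu^-(D,C)$ are summed over the same index set $C$ for the unary and binary terms, and over $C\times D$ for the antagonism term once the dummy indices are renamed. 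This alignment is what makes a term-by-term comparison possible, and the claim is then a pure algebraic identity between the two multilinear forms, so no monotonicity or boundary argument is needed.

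For the ``only if'' direction I would extract the three conditions by evaluating the identity $\mu^+(C,D)=\mu^-(D,C)$ on well-chosen pairs, in increasing order of complexity. Taking $C=\{j\}$, $D=\emptyset$ gives $\mu^+(\{j\},\emptyset)=a^+_j$ and $\mu^-(\emptyset,\{j\})=a^-_j$, hence condition~1. Taking $C=\{j,k\}$, $D=\emptyset$ gives $\mu^+(\{j,k\},\emptyset)=a^+_j+a^+_k+a^+_{jk}$ and $\mu^-(\emptyset,\{j,k\})=a^-_j+a^-_k+a^-_{jk}$, which combined with condition~1 yields condition~2. Finally, taking $C=\{j\}$, $D=\{k\}$ gives $\mu^+(\{j\},\{k\})=a^+_j+a^+_{j\vert k}$ and $\mu^-(\{k\},\{j\})=a^-_j+a^-_{k\vert j}$, so that condition~1 forces $a^+_{j\vert k}=a^-_{k\vert j}$, which is condition~3.

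For the converse, I would substitute the three equalities into the general expressions for $\mu^+(C,D)$ and $\mu^-(D,C)$ displayed above and check that every term matches. The unary and binary sums coincide immediately by conditions~1 and~2. For the antagonism sums, the point is that $\sum_{j\in D,\,k\in C}a^-_{j\vert k}$ can be rewritten, by renaming $j\leftrightarrow k$, as $\sum_{j\in C,\,k\in D}a^-_{k\vert j}$, which by condition~3 equals $\sum_{j\in C,\,k\in D}a^+_{j\vert k}$, matching the antagonism term of $\mu^+(C,D)$ exactly.

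I expect the only delicate point to be this bookkeeping of the antagonism term: one must swap the dummy indices in the double sum defining $\mu^-(D,C)$ so that the pair $(j,k)$ ranges over $C\times D$ in both expressions before invoking condition~3. Everything else is a routine, finite, term-by-term verification. I would also note that this proposition differs from Proposition~\ref{Prop_cond} precisely in condition~3, where the present $\mu^+(C,D)=\mu^-(D,C)$ requirement gives the sharper equality $a^+_{j\vert k}=a^-_{k\vert j}$ rather than the weaker balance $a^+_{j\vert k}-a^-_{j\vert k}=a^-_{k\vert j}-a^+_{k\vert j}$ obtained there; this is the expected refinement, since equating $\mu^+$ with $\mu^-$ separately is a stronger demand than equating only their difference $\hat\mu(C,D)$ with $-\hat\mu(D,C)$.
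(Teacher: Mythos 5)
Your proposal is correct and follows essentially the same route as the paper, which proves this proposition by declaring it ``analogous to Proposition~\ref{Prop_cond}'': you extract conditions 1--3 by evaluating the identity on $(\{j\},\emptyset)$, $(\{j,k\},\emptyset)$ and $(\{j\},\{k\})$, and verify the converse term by term, with the index swap in the antagonism sum of $\mu^-(D,C)$ correctly handled. The closing observation that condition~3 here implies the weaker balance condition of Proposition~\ref{Prop_cond} matches the remark the paper makes after Corollary~\ref{lem_SYM_2}.
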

\begin{proof}
Analogous to Proposition \ref{Prop_cond}.
\end{proof}

\begin{corollary}\label{lem_SYM_2}
Given a 2-additive decomposable bicapacity $\hat\mu$, $Ch^{B+}(P^{B}(a,b),\mu^{+})=Ch^{B-}(P^{B}(b,a),\mu^{-})$ for all $a,b\in A$ iff 
\begin{enumerate}
\item for each $j \in {\cal{J}}$, $a^{+}_{j} = a^{-}_{j}$,
\item for each $\{j,k\} \subseteq {\cal{J}}$, $a^{+}_{jk} = a^{-}_{jk}$,
\item for each $ j, k \in {\cal{J}}$, $j \neq k$, $a^{+}_{j \vert k}= a^{-}_{k \vert j}$. 
\end{enumerate}
\end{corollary}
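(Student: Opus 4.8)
The plan is to obtain this Corollary as an immediate composition of two biconditionals already established, exactly paralleling the way Corollary \ref{lem_SYM} is derived from Propositions \ref{Prop_cond} and \ref{Sym_1}. The target equivalence has the shape $(\star)\Leftrightarrow(1)\wedge(2)\wedge(3)$, where $(\star)$ denotes the symmetry $Ch^{B+}(P^{B}(a,b),\mu^{+})=Ch^{B-}(P^{B}(b,a),\mu^{-})$ holding for all $a,b\in A$. I would introduce an intermediate condition $(\dagger)$, namely $\mu^{+}(C,D)=\mu^{-}(D,C)$ for each $(C,D)\in P({\cal J})$, and then establish $(\star)\Leftrightarrow(\dagger)$ and $(\dagger)\Leftrightarrow(1)\wedge(2)\wedge(3)$ separately.

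For the first equivalence I would invoke Proposition \ref{Salvo} verbatim: it states precisely that $(\star)$ holds for all possible $a,b$ if and only if $(\dagger)$ holds. Note that Proposition \ref{Salvo} is formulated for an arbitrary bicapacity, so the assumption of $2$-additive decomposability is not actually used at this stage. For the second equivalence I would invoke Proposition \ref{Prop_cond_2}, which, under the standing hypothesis that $\hat\mu$ is $2$-additive decomposable, asserts exactly that $(\dagger)$ is equivalent to conditions (1), (2) and (3). Chaining the two biconditionals then yields $(\star)\Leftrightarrow(1)\wedge(2)\wedge(3)$, which is the statement of the Corollary.

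Since both constituent results are already proved, there is no genuine obstacle: the only point that needs to be checked is that the intermediate condition produced by Proposition \ref{Salvo} is literally the same condition whose parameter translation is supplied by Proposition \ref{Prop_cond_2} — and indeed both are $\mu^{+}(C,D)=\mu^{-}(D,C)$ for every $(C,D)\in P({\cal J})$, so the two results dovetail without any reformulation. I would also record that the role of the $2$-additivity hypothesis is confined entirely to the second step, so the Corollary's assumption is inherited solely from Proposition \ref{Prop_cond_2}, exactly as in Corollary \ref{lem_SYM}.
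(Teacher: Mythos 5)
Your proposal is correct and coincides with the paper's own proof, which likewise derives the Corollary by chaining Proposition \ref{Salvo} (equivalence of the integral symmetry with $\mu^{+}(C,D)=\mu^{-}(D,C)$) with Proposition \ref{Prop_cond_2} (translation of that condition into the three parameter identities under $2$-additive decomposability). Your additional remark locating the use of the $2$-additivity hypothesis in the second step is accurate but not needed.
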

\begin{proof}
It follows by Propositions \ref{Prop_cond_2} and \ref{Salvo}.
\end{proof}

\noindent Because the first two conditions of Proposition \ref{Sym_1} are the same of the first two conditions of Proposition \ref{Salvo}, but the third condition of Proposition \ref{Salvo} implies the third one of Proposition \ref{Sym_1}, in order to get both $Ch^{B}(P^{B}(a,b),\hat\mu)=-Ch^{B}(P^{B}(b,a),\hat\mu)$ and $Ch^{B+}(P^{B}(a,b),\mu^{+})=Ch^{B-}(P^{B}(b,a),\mu^{-})$ for all $a,b\in A$, we impose that shoul be fulfilled the conditions in Proposition \ref{Salvo}.

\section{Assessing the preference information}\label{assessing}
On the basis of the considered $2$-additive decomposable bicapacity $\hat\mu$, and holding the symmetry condition in Corollary \ref{lem_SYM_2}, we propose the
following methodology which simplifies the assessment of the
preference information. \\We consider the following information
provided by the DM and their representation in terms of linear
constraints:

\begin{enumerate}
\item {\it Comparing pairs of actions locally or globally}. The constraints represent some pairwise comparisons on a set of
training actions. Given two actions $a$ and $b$, the DM may
prefer $a$ to $b$, $b$ to $a$ or be indifferent to both:
    \begin{enumerate}
    \item the linear constraint associated with $a{\cal P}b$ ($a$ is locally preferred to $b$) is: 
    $$Ch^{B}(P^{B}(a, b), \hat{\mu}) > 0;$$
    \item the linear constraints associated with $a{\cal P}^{I}b$ ($a$ is preferred to $b$ with respect to the bipolar PROMETHEE I method) are:     
    $$    
    \left.
    \begin{array}{l}
    \Phi^{B+}(a)\geq\Phi^{B+}(b),\\
    \Phi^{B-}(a)\leq\Phi^{B-}(b),\\
    \Phi^{B+}(a)-\Phi^{B-}(a)>\Phi^{B+}(b)-\Phi^{B-}(b),\\
    \end{array}
    \right\} 
    $$
    \item the linear constraint associated with $a{\cal P}^{II}b$ ($a$ is preferred to $b$ with respect to the bipolar PROMETHEE II method) is: 
    $$    
    \Phi^{B}(a)>\Phi^{B}(b)
    $$    
    \item the linear constraint associated with $a{\cal I}b$ ($a$ is locally indifferent to $b$) is:
    $$
    Ch^{B}(P^{B}(a, b), \hat{\mu}) = 0
    $$
    \item the linear constraints associated with $a{\cal I}^{I}b$ ($a$ is indifferent to $b$ with respect to the bipolar PROMETHEE I method) are:    
    $$    
    \left.
    \begin{array}{l}
    \Phi^{B+}(a)=\Phi^{B+}(b),\\
    \Phi^{B-}(a)=\Phi^{B-}(b),\\
    \end{array}
    \right\}
    $$
    \item the linear constraint associated with $a{\cal I}^{II}b$ ($a$ is indifferent to $b$ with respect to the bipolar PROMETHEE II method) is:
    $$
    \Phi^{B}(a)=\Phi^{B}(b)
    $$
    \end{enumerate}
\item {\it Comparison of the intensity of preferences between pairs of
actions}. The constraints represent some pairwise comparisons between pairs of alternatives on a set of
training actions. Given four actions $a$, $b$, $c$ and $d$:
\begin{enumerate}
\item the linear constraints associated with $(a,b) {\cal{P}} (c, d)$ (the
local preference of $a$ over $b$ is larger than the
local preference of $c$ over $d$) is:
\[
Ch^{B}(P^{B}(a, b), \hat{\mu}) > Ch^{B}(P^{B}(c, d), \hat{\mu})
\]
\item the linear constraints associated with $(a,b) {\cal{I}} (c, d)$ (the
local preference of $a$ over $b$ is the same of
local preference of $c$ over $d$) is:
\[
Ch^{B}(P^{B}(a, b), \hat{\mu}) = Ch^{B}(P^{B}(c, d), \hat{\mu}) 
\]
\end{enumerate}
\item {\it Importance of criteria}.
A partial ranking over the set of criteria $\cal{J}$ may be
provided by the DM: 
    \begin{enumerate}
    \item criterion $g_j$ is more important than criterion $g_k$, which leads
    to the constraint $a_{j} > a_{k}$;
    \item criterion $g_j$ is equally important to criterion $g_k$, which leads
    to the constraint $a_{j} = a_{k}$.
    \end{enumerate}
\item {\it The sign of interactions}. The DM may be able, for certain cases,
to provide the sign of some interactions. For example, if there is
a synergy effect when criterion $g_j$ interacts with criterion
$g_k$, the following constraint
should be added to the model: $a_{jk} > 0$. 
\item {\it Interaction between pairs of criteria}. The DM can provide
some information about interaction between criteria:
     \begin{enumerate}
    \item[a)] if the DM feels that interaction between $g_j$ and $g_k$ is greater
    than the interaction between $g_p$ and $g_q$, the constraint should be
    defined as follows: $|a_{jk}| > |a_{pq}|$ where in particular:
    \begin{itemize}
    \item if both couples of criteria are synergic then: $a_{jk} > a_{pq}$,
    \item if both couples of criteria are redundant then: $a_{jk} < a_{pq}$,
    \item if $(j,k)$ is a couple of synergic criteria and $(p,q)$ is a couple of redundant criteria, then: $a_{jk} > -a_{pq}$,
    \item if $(j,k)$ is a couple of redundant criteria and $(p,q)$ is a couple of synergic criteria, then: $-a_{jk} > a_{pq}$.
    \end{itemize}
    \item[b)] if the DM feels that the strength of the interaction between $g_j$ and $g_k$ is the same of the strength of the interaction between
    $g_p$ and $g_q$, the constraint will be the following: $|a_{jk}| = |a_{pq}|$ and in particular:
    \begin{itemize}
    \item if both couples of criteria are synergic or redundant then: $a_{jk} = a_{pq}$,
    \item if one couple of criteria is synergic and the other is redundant then: $a_{jk} = -a_{pq}$,
    \end{itemize}     
    \end{enumerate}
\item {\it The power of the opposing criteria}{\label{interact}}. Concerning the power
of the opposing criteria several situations may occur. For
example:
    \begin{enumerate}
    \item[a)] when the opposing power of $g_k$ is larger than the
    opposing power of $g_h$, with respect to $g_j$, which expresses a positive preference,
    we can define the following constraint:
    $a^{+}_{j \vert k} < a^{+}_{j \vert h}$ (because $a^{+}_{j \vert h}\leq 0$ and $a^{-}_{j \vert h}\leq 0$ for all $j,k$ with $j\neq k$);
    \item[b)] if the opposing power of $g_k$, expressing negative preferences, is larger with $g_j$ rather
    than with $g_h$, the constraint will be $a^{+}_{j \vert k} < a^{+}_{h \vert
    k}$.
    \end{enumerate}
\end{enumerate}

\subsection{A linear programming model}
All the constraints presented in the previous section
along with the symmetry, boundary and monotonicity conditions can now be put together and form a system of
linear constraints. Strict inequalities can be converted into weak inequalities by adding a variable $\varepsilon$. It is well-know that
such a system has a feasible solution if and only if when
maximizing $\varepsilon$, its value is strictly positive
\cite{MarichalRo00}. Considering constraints given by Corollary \ref{lem_SYM_2} for the symmetry condition, the linear programming model can be stated
as follows (where $j{\cal{P}}k$ means that criterion $g_j$ is
more important than criterion $g_k$; the remaining relations have a similar interpretation):

\begin{scriptsize}
    \begin{displaymath}
    \begin{array}{l}
    \mbox{Max}\; \varepsilon \\[5mm]
      \left.
      \begin{array}{ll}
           
             Ch^{B}(P^{B}(a, b),\hat\mu) \geq \varepsilon \;\; \mbox{if} \;\; a{\cal{P}}b, & Ch^{B}(P^{B}(a, b),\hat\mu) = 0  \;\; \mbox{if} \;\; a{\cal{I}}b, \\[1mm]          
             \left.
             \begin{array}{l}
             \Phi^{B+}(a)\geq\Phi^{B+}(b),\\
             \Phi^{B-}(a)\leq\Phi^{B-}(b),\\
             \Phi^{B+}(a)-\Phi^{B-}(a)\geq \Phi^{B+}(b)-\Phi^{B-}(b)+\varepsilon\\
             \end{array}
             \right\} \;\; \mbox{if} \;\; a{\cal P}_B^{I}b & 
             \left.
             \begin{array}{l}
             \Phi^{B+}(a)=\Phi^{B+}(b),\\
             \Phi^{B-}(a)=\Phi^{B-}(b)\\           
             \end{array}
             \right\} \;\; \mbox{if} \;\; a{\cal I}_B^{I}b \\[1mm]
           \Phi^{B}(a)\geq\Phi^{B}(b)+\varepsilon \;\; \mbox{if} \;\; a{\cal P}_B^{II}b & \Phi^{B}(a)=\Phi^{B}(b) \;\; \mbox{if} \;\; a{\cal I}_B^{II}b \\[1mm]
             {Ch^{B}(P^{B}(a, b),\hat\mu) \geq Ch^{B}(P^{B}(c, d),\hat\mu) + \varepsilon } \;\; \mbox{if} \;\; (a,b) {\cal{P}} (c,d), & {Ch^{B}(P^{B}(a, b),\hat\mu) = Ch^{B}(P^{B}(c, d),\hat\mu)}  \;\; \mbox{if} \;\;(a,b){\cal{I}}(c,d), \\[1mm]
            a_{j} - a_{k} \geq \varepsilon \;\; \mbox{if} \;\; j{\cal{P}}k,  & a_{j} = a_{k} \;\; \mbox{if} \;\; j{\cal{I}}k,   \\[1mm] 
            |a_{jk}| - |a_{pq}| \geq \varepsilon \;\; \mbox{if} \;\;  \{ j, k\} {\cal{P}}\{ p, q\}, \; \mbox{(see point 5.a) of the previous subsection )}& \\[1mm]
            |a_{jk}| = |a_{pq}|                \;\;\mbox{if} \;\;  \{ j, k\} {\cal{I}}\{ p, q\}, \; \mbox{(see point 5.b) of the previous subsection )} &  \\[1mm]  
            a_{jk} \geq \varepsilon  \;\; \mbox{if there is synergy between criteria $j$ and $k$}, \\[1mm]
            a_{jk} \leq - \varepsilon \;\; \mbox{if there is redundancy between criteria $j$ and $k$}, \\[1mm]
            a_{jk} = 0  \;\;\mbox{if criteria $j$ and $k$ are not interacting}, \\[1mm]
           \mbox{Power of the opposing criteria of the type \ref{interact}:}\\[1mm]
           a^{+}_{j \vert k} - a^{+}_{j \vert p} \geq \varepsilon, & a^{-}_{j \vert k} - a^{-}_{j \vert p} \geq \varepsilon,  \\[1mm]
           a^{+}_{j \vert k} - a^{+}_{p \vert k} \geq \varepsilon, & a^{-}_{j \vert k} - a^{-}_{p \vert k} \geq \varepsilon,  \\[1mm]           
           \mbox{Symmetry conditions (Proposition \ref{lem_SYM_2}):}\\[1mm] 
           {\displaystyle  a_{j \vert k}^{+} = a_{k \vert j}^{-}, \; \; \forall \; j, k\in {\cal{J}}, j\neq k \;\; \; }\\[1mm]
           \mbox{Boundary and monotonicity conditions:}\\[1mm] 
           {\displaystyle \sum_{j \in {\cal{J}}}a_{j} + \sum_{\{j, k \} \subseteq {\cal{J}}}a_{jk} = 1},\\[1mm]
           {\displaystyle a_{j}\geq 0 \; \; \forall \; j \in {\cal{J}}}, & {\displaystyle a_{j \vert k}^{+}, \; a_{j \vert k}^{-} \; \leq 0 \; \; \forall \; j, k \in {\cal{J}}},\\[1mm]
           {\displaystyle a_{j} + \sum_{k \in C}a_{jk} + \sum_{k \in D}a^{+}_{j \vert k} \geq 0, \; \; \forall \;
           j \in {\cal{J}}, \; \forall  (C \cup \{ j \}, D) \in P({\cal{J}}) }, \\[1mm]
            {\displaystyle a_{j} + \sum_{k \in D}a_{jk} + \sum_{h \in C}a^{-}_{h \vert j} \geq 0, \; \; \forall \;
           j \in {\cal{J}}, \; \forall  (C, D \cup \{ j \}) \in P({\cal{J}}) }. \\           
    \end{array}
    \right\}E^{A^R}
    \end{array}
    \end{displaymath}
\end{scriptsize}




\subsection{Restoring PROMETHEE}
The condition which allows to restore the classical PROMETHEE methods is the following:
\begin{enumerate}
\item $\forall j, k \in {\cal{J}}, \; \; a_{jk} = a^{+}_{j \vert k} = a^{-}_{j \vert k} = 0$.
\end{enumerate}

\noindent If Condition 1. is not satisfied and the following condition holds
\begin{enumerate}
	\item[2.] $\forall j, k \in {\cal{J}}, a^{+}_{j \vert k} = a^{-}_{j \vert k} = 0$,
\end{enumerate}
\noindent then the comprehensive preference of $a$ over $b$ is calculated as
the difference between the Choquet integral of the positive
preferences and the Choquet integral of the negative preferences,
with a common capacity $\mu$ on ${\cal J}$ for the positive and the negative
preferences, i.e. there exists $\mu: 2^{\cal J}\rightarrow[0,1]$, with $\mu(\emptyset)=0,$ $\mu({\cal J})=1,$ and $\mu(A)\leq\mu(B)$ for all $A\subseteq B\subseteq{\cal J}$, such that 
$$Ch^{B}(P^{B}(a, b), \hat{\mu})=\int_0^1\mu(\{j\in{\cal J}:P_{j}^{B}(a,b)>t\})dt-\int_0^1 \mu(\{j\in{\cal J}:P_{j}^{B}(a,b)<-t\})dt.$$
We shall call this type of aggregation of
preferences, the symmetric Choquet integral PROMETHEE method.\\
If neither 1. nor 2. are satisfied, but the following condition holds
\begin{enumerate}
	\item[3.] $\forall j, k \in {\cal{J}}, a^{+}_{j \vert k} = a^{-}_{k \vert j}$,
\end{enumerate}
\noindent then we have the Bipolar PROMETHEE methods.\\

\subsection{A constructive learning preference information elicitation process}\label{constructive process}
The previous Conditions $1.$-$3.$ suggest a proper
way to deal with the linear programming model in order to assess
the interactive bipolar criteria coefficients. Indeed, it is very
wise trying before to elicit weights concordant with the classical
PROMETHEE method. If this is not possible, one can consider a
PROMETHEE method which aggregates positive and negative
preferences using the Choquet integral. If this is not possible, one can consider 
the bipolar symmetric PROMETHEE method. If, by proceeding in this
way, we are not able to represent the DM's preferences, then we can
take into account a more sophisticated aggregation procedure by
using the bipolar PROMETHEE method. This way to progress from the
simplest to the most sophisticated model can be outlined in a
four steps procedure as follows:

\begin{enumerate}
\item Solve the linear programming problem

\begin{equation}\label{First_PROB}
\begin{array}{l}
\;\;\mbox{Max}\; \varepsilon=\varepsilon_{1} \\[2mm]
\left.
\begin{array}{ll}
 E^{A^{R}}\\
 a_{jk} = a^{+}_{j \vert k} = a^{-}_{j \vert k} = 0,\;\;\forall j, k \in {\cal{J}}
\end{array}
\right\}E_{1}
\end{array}
\end{equation}

adding to $E^{A^{R}}$ the constraint
related to the previous Condition $1.$ If $E_{1}$ is feasible and $\varepsilon_{1} > 0$, then the obtained preferential
parameters are concordant with the classical PROMETHEE method.
Otherwise,
\item Solve the linear programming problem

\begin{equation}\label{Second_PROB}
\begin{array}{l}
\;\;\mbox{Max}\; \varepsilon=\varepsilon_{2} \\[2mm]
\left.
\begin{array}{ll}
 E^{A^{R}}\\
 a^{+}_{j \vert k} = a^{-}_{j \vert k} = 0, \;\;\forall j, k \in {\cal{J}}
\end{array}
\right\}E_{2}
\end{array}
\end{equation}

adding to $E^{A^{R}}$ the constraint related to the previous Condition $2$. If $E_{2}$ is feasible and $\varepsilon_2 > 0$, then the information
is concordant with the symmetric Choquet integral PROMETHEE method having a unique capacity for the negative and the positive part.
Otherwise,

\item Solve the linear programming problem

\begin{equation}\label{third_PROB}
\begin{array}{l}
\;\;\mbox{Max}\; \varepsilon=\varepsilon_{3} \\[2mm]
\;\;E^{A^{R}}\\
\end{array}
\end{equation}

\noindent If $E_{3}$ is feasible and $\varepsilon_3 > 0$, then the information
is concordant with the bipolar PROMETHEE method. Otherwise,
\item We can try to help the DM by providing some information
about inconsistent judgments, when it is the case, by using a
similar constructive learning procedure proposed in
\cite{MousseauFiDiGoCl03}. In fact, in the linear programming model some of the constraints
cannot be relaxed, that is, the basic properties of the model
(symmetry, boundary and monotonicity conditions). The remaining constraints can lead
to an infeasible linear system which means that the DM provided
inconsistent information about her/his preferences. The methods
proposed in \cite{MousseauFiDiGoCl03} can then be used in this
context, providing to the DM some useful information about
inconsistent judgments.
\end{enumerate}

\section{ROR and Bipolar PROMETHEE methods}
In the above sections we dealt with the problem of finding a bicapacity restoring preference information provided by the DM in case where multiple criteria evaluations are aggregated by Bipolar PROMETHEE method. Generally, there could exist more than one model (in our case the model will be a bicapacity, but in other contexts it could be a utility function or an outranking relation) compatible with the preference information provided by the DM on the training set of alternatives. Each compatible model restores the preference information provided by the DM but two different compatible models could compare the other alternatives not provided as examples by the DM in a different way. For this reason, the choice of one of these models among those compatible could be considered arbitrary. In order to take into account not only one but the whole set of models compatible with the preference information provided by the DM, we consider the ROR \cite{greco2010robust}. This approach considers the whole set of models compatible with preference information provided by the DM building two preference relations: the weak \textit{necessary} preference relation, for which alternative $a$ is necessarily weakly preferred to alternative $b$ (and we write $a\succsim^{N}b$), if $a$ is at least as good as $b$ for all compatible models, and the weak \textit{possible} preference relation, for which alternative $a$ is possibly weakly preferred to alternative $b$ (and we write $a\succsim^{P}b$), if $a$ is at least as good as $b$ for at least one compatible model. \\
Considering the bipolar flows (\ref{pos_flow})-(\ref{net_flow}) and the comprehensive Choquet integral in equation (\ref{bipolarpref}), given the alternatives $a,b\in A$, we say that $a$ outranks $b$ (or $a$ is at least as good as $b$):
\begin{itemize}
\item locally, if $Ch^{B}(P^{B}(a, b), \hat{\mu})\geq 0$;
\item globally and considering the bipolar PROMETHEE I method, if $\Phi^{B+}(a)\geq\Phi^{B+}(b)$, $\Phi^{B-}(a)\leq\Phi^{B-}(b)$;
\item globally and considering the bipolar PROMETHEE II method, if $\Phi^{B}(a)\geq\Phi^{B}(b).$
\end{itemize}

\noindent To check if $a$ is necessarily preferred to $b$, we look if it is possible that $a$ does not outrank $b$. Locally, this means that it is possible that there exists a bicapacity $\hat\mu$ such that $Ch^{B}(P^{B}(a,b),\hat\mu)<0$; globally, considering the bipolar PROMETHEE I this means that $\Phi^{B+}(a)<\Phi^{B+}(b)$ or $\Phi^{B-}(a)>\Phi^{B-}(b)$, while considering the bipolar PROMETHEE II this means that $\Phi^B(a)<\Phi^B(b)$. Given the following set of constraints, 

$$
\label{SN}
\left.
\begin{array}{l}
  E^{A^R}\\[2mm]                
          
  \mbox{if one verifies the truth of global outranking: }\\[2mm]  
  \mbox{\quad if exploited in the way of the bipolar PROMETHEE II method, then: }\\[2mm]  \quad \quad \Phi^B(a) + \varepsilon \leq \Phi^B(b)\\[2mm]     
  \mbox{\quad if exploited in the way of the bipolar PROMETHEE I method, then: }\\[2mm]             
  \mbox {\quad \quad }  \Phi^{B+}(a) + \varepsilon \leq \Phi^{B+}(b) + 2M_1 \mbox{ \ and \ }  \Phi^{B-}(a) + 2M_2 \geq \Phi^{B-}(b) + \varepsilon \\[2mm]
  \mbox { \quad \quad }  \mbox{ where } M_i \in \{0,1\}, i=1,2, \mbox{ and } \sum_{i=1}^2 M_i \leq 1 \\[2mm]
          \mbox{if one verifies the truth of local outranking: }\\[2mm]  
           \mbox { \quad \quad } Ch^{B}(P^{B}(a,b),\hat\mu)+\varepsilon \leq 0                      
\end{array}
\right\} E^{N}(a,b)
$$
\noindent we say that $a$ is weakly necessarily preferred to $b$ if $E^{N}(a,b)$ is infeasible or $\varepsilon^{*}\leq 0$ where $\varepsilon^{*}=\max\varepsilon$ s.t. $E^{N}(a,b)$. 

\noindent To check if $a$ is possibly preferred to $b$, we check if it is possible that $a$ outrank $b$ for at least one bicapacity $\hat\mu.$ Locally, this means that there exists a bicapacity $\hat\mu$ such that $Ch^{B}(P^{B}(a,b),\hat\mu)\geq 0$; globally, considering PROMETHEE I this means that $\Phi^{B+}(a)\geq\Phi^{B+}(b)$ and $\Phi^{B-}(a)\leq\Phi^{B-}(b)$, while considering PROMETHEE II this means that $\Phi^{B}(a)\geq\Phi^{B}(b)$. Given the following set of constraints, 

$$
\label{SP}
\left.
\begin{array}{l}
  E^{A^R}\\[2mm]                
          
  \mbox{if one verifies the truth of global outranking: }\\[2mm]  
  \mbox{\quad if exploited in the way of the bipolar PROMETHEE II method, then: }\\[2mm]  \quad \quad \Phi^B(a) \geq \Phi^B(b)\\[2mm]     
  \mbox{\quad if exploited in the way of the bipolar PROMETHEE I method, then: }\\[2mm]             
  \mbox {\quad \quad }  \Phi^{B+}(a) \geq \Phi^{B+}(b) \mbox{ \ and \ }  \Phi^{B-}(a) \leq \Phi^{B-}(b) \\[2mm]
          \mbox{if one verifies the truth of local outranking: }\\[2mm]  
           \mbox { \quad \quad } Ch^{B}(P^{B}(a,b),\hat\mu) \geq 0                      
\end{array}
\right\} E^{P}(a,b)
$$
\noindent we say that $a$ is weakly possibly preferred to $b$ if $E^{P}(a,b)$ is feasible and $\varepsilon^{*}> 0$ where $\varepsilon^{*}=\max\varepsilon$ s.t. $E^{P}(a,b)$.

\section{Didactic Example}
Inspired by an example in literature \cite{Grabisch1996}, let us consider the problem of evaluating High School students according to their grades in Mathematics, Physics and Literature. In the following we suppose that the Director is the DM, while we will cover the role of analyst helping and supporting the DM in (her)his evaluations. \\
The Director thinks that scientific subjects (Mathematics and Physics) are more important than Literature. However, when students $a$ and $b$ are compared, if $a$ is better than $b$ both at Mathematics and Physics but $a$ is much worse than $b$ at Literature, then the Director has some doubts about the comprehensive preference of $a$ over $b$.\\
Mathematics and Physics are in some sense \textit{redundant} with respect to the comparison of students, since usually students which are good at Mathematics are also good at Physics. As a consequence, if $a$ is better than $b$ at Mathematics, the comprehensive preference of the student $a$ over the student $b$ is stronger if $a$ is better than $b$ at Literature rather than if $a$ is better than $b$ at Physics. \\
Let us consider the students whose grades (belonging to the range $\left[0,20\right]$) are represented in Table \ref{Evaluations} and the following formulation of the preference of $a$ over $b$ with respect to each criterion $g_j$, for all $j=(M)$ Mathematics, $(Ph)$ Physics, $(L)$ Literature.

\begin{table}[htbp]
\begin{center}
\begin{tabular}{cccc}
\hline
\mbox{Students} & \mbox{Mathematics} & \mbox{Physics} & \mbox{Literature}\\
\hline
$s_1$   & 16 & 16 & 16 \\
$s_2$   & 15 & 13 & 18 \\
$s_3$   & 19 & 18 & 14 \\
$s_4$   & 18 & 16 & 15 \\
$s_5$   & 15 & 16 & 17 \\
$s_6$   & 13 & 13 & 19 \\
$s_7$   & 17 & 19 & 15 \\
$s_8$   & 15 & 17 & 16 \\
\hline
\end{tabular}
\end{center}
\caption{Evaluations of the students}\label{Evaluations}
\end{table}

$$
P_j(a,b)=
\left\{
\begin{array}{ccc}
0                  & \mbox{if} & g_{j}(b)\geq g_j(a)   \\
(g_j(a)-g_j(b))/4  & \mbox{if} & 0<g_j(a)-g_j(b)\leq 4 \\
1                  &           & \mbox{otherwise}      \\
\end{array}
\right.
$$

From the values of the partial preferences $P_j(a,b)$, we obtain the positive and the negative partial preferences $P_{j}^{B}(a,b)$ with respect to each criterion $g_j$, for $j=M,Ph,L$ using the definition (\ref{equat}).
\noindent Thus, to each pair of students $(s_{i},s_{j})$ is associated a vector of three elements: \\ $P^{B}(s_{i},s_{j})=\left[P^{B}_{M}(s_{i},s_{j}),P^{B}_{Ph}(s_{i},s_{j}),P^{B}_{L}(s_{i},s_{j})\right]$; for example, to the pair of students $(s_{1},s_{2})$ is associated the vector $P^{B}(s_{1},s_{2})=\left[0.25,0.75,-0.5\right]$.


Let us suppose that the Dean provides the following information regarding some pairs of students:
\begin{itemize}
\item student $s_1$ is preferred to student $s_2$ more than student $s_3$ is preferred to student $s_4$,
\item student $s_7$ is preferred to student $s_8$ more than student $s_5$ is preferred to student $s_6$.
\end{itemize}
As explained in section \ref{assessing}, these two information are translated by the constraints: $$Ch^{B}(P^{B}(s_1,s_2),\hat{\mu})>Ch^{B}(P^{B}(s_3,s_4),\hat{\mu}),\;\; \mbox{and} \;\; Ch^{B}(P^{B}(s_7,s_8),\hat{\mu})>Ch^{B}(P^{B}(s_5,s_6),\hat{\mu})$$

\noindent Following the procedure described in section \ref{constructive process}, at first we check if the classical PROMETHEE method and the symmetric Choquet integral PROMETHEE method are able to restore the preference information provided by the Dean; solving the optimization problems \ref{First_PROB} and \ref{Second_PROB}, we get $\varepsilon_{1}<0$ and $\varepsilon_{2}<0$ and therefore neither the classical PROMETHEE method nor the symmetric Choquet integral PROMETHEE method are able to explain the preference information provided by the Dean. Solving the optimization problem \ref{third_PROB}, we get this time $\varepsilon_3>0$; this means that the information provided by the Dean can be explained by the Bipolar PROMETHEE method.\\
In order to better understand the problem at hand, we suggested to the Dean to use the ROR applied to the bipolar PROMETHEE method as discussed in the previous section. Using the first piece of preference information, we get the necessary and possible preference relations shown in Table \ref{first_necessary} at local level and considering PROMETHEE II and PROMETHEE I. In Table \ref{nec_local}, the value 1 in position $(i,j)$ means that $s_{i}$ is necessarily locally preferred to $s_{j}$ while the viceversa corresponds to the value. Analogous meaning have the values 1 and 0 in in Tables \ref{nec_PROM_II} and \ref{nec_PROM_I} respectively.

\begin{table}[!h]
\begin{center}
\caption{Necessary preference relations after the first piece of preference information\label{first_necessary}}
\subtable[Local\label{nec_local}]{%
\resizebox{0.25\textwidth}{!}{
\begin{tabular}{ccccccccc}
\hline
    &  $\mathbf{s_1}$  &  $\mathbf{s_2}$ &  $\mathbf{s_3}$ &  $\mathbf{s_4}$ &  $\mathbf{s_5}$ &  $\mathbf{s_6}$ &  $\mathbf{s_7}$ &  $\mathbf{s_8}$ \\ 
\hline
 $\mathbf{s_1}$  &        0 &          1 &          0 &          0 &          0 &          1 &          0 &          0 \\

 $\mathbf{s_2}$  &        0 &          0 &          0 &          0 &          0 &          0 &          0 &          0 \\

 $\mathbf{s_3}$  &        1 &          1 &          0 &          1 &          0 &          1 &          0 &          0 \\

 $\mathbf{s_4}$  &        0 &          1 &          0 &          0 &          0 &          0 &          0 &          0 \\

 $\mathbf{s_5}$  &        0 &          1 &          0 &          0 &          0 &          1 &          0 &          0 \\

 $\mathbf{s_6}$  &        0 &          0 &          0 &          0 &          0 &          0 &          0 &          0 \\

 $\mathbf{s_7}$  &        1 &          1 &          0 &          0 &          1 &          1 &          0 &          1 \\

 $\mathbf{s_8}$  &        0 &          1 &          0 &          0 &          1 &          1 &          0 &          0 \\

\hline
\end{tabular}
}
}
\subtable[PROMETHEE II\label{nec_PROM_II}]{%
\resizebox{0.25\textwidth}{!}{
\begin{tabular}{ccccccccc}
\hline
    &  $\mathbf{s_1}$  &  $\mathbf{s_2}$ &  $\mathbf{s_3}$ &  $\mathbf{s_4}$ &  $\mathbf{s_5}$ &  $\mathbf{s_6}$ &  $\mathbf{s_7}$ &  $\mathbf{s_8}$ \\ 
\hline
 $\mathbf{s_1}$  &   0 &          0 &          0 &          0 &          0 &          0 &          0 &          0 \\

 $\mathbf{s_2}$  &        0 &          0 &          0 &          0 &          0 &          0 &          0 &          0 \\

 $\mathbf{s_3}$  &        0 &          0 &          0 &          1 &          0 &          0 &          0 &          0 \\

 $\mathbf{s_4}$  &        0 &          0 &          0 &          0 &          0 &          0 &          0 &          0 \\

 $\mathbf{s_5}$  &        0 &          1 &          0 &          0 &          0 &          1 &          0 &          0 \\

 $\mathbf{s_6}$  &        0 &          0 &          0 &          0 &          0 &          0 &          0 &          0 \\

 $\mathbf{s_7}$  &        1 &          1 &          0 &          0 &          1 &          1 &          0 &          1 \\

 $\mathbf{s_8}$  &        0 &          0 &          0 &          0 &          0 &          0 &          0 &          0 \\     
 
\hline
\end{tabular}
}
}
\subtable[PROMETHEE I\label{nec_PROM_I}]{%
\resizebox{0.25\textwidth}{!}{
\begin{tabular}{ccccccccc}
\hline
    &  $\mathbf{s_1}$  &  $\mathbf{s_2}$ &  $\mathbf{s_3}$ &  $\mathbf{s_4}$ &  $\mathbf{s_5}$ &  $\mathbf{s_6}$ &  $\mathbf{s_7}$ &  $\mathbf{s_8}$ \\ 
\hline
 $\mathbf{s_1}$  &        0 &          0 &          0 &          0 &          0 &          0 &          0 &          0 \\

 $\mathbf{s_2}$  &        0 &          0 &          0 &          0 &          0 &          0 &          0 &          0 \\

 $\mathbf{s_3}$  &        0 &          0 &          0 &          0 &          0 &          0 &          0 &          0 \\

 $\mathbf{s_4}$  &        0 &          0 &          0 &          0 &          0 &          0 &          0 &          0 \\

 $\mathbf{s_5}$  &        0 &          0 &          0 &          0 &          0 &          0 &          0 &          0 \\

 $\mathbf{s_6}$  &        0 &          0 &          0 &          0 &          0 &          0 &          0 &          0 \\

 $\mathbf{s_7}$  &        1 &          1 &          0 &          0 &          0 &          0 &          0 &          0 \\

 $\mathbf{s_8}$  &        0 &          0 &          0 &          0 &          0 &          0 &          0 &          0 \\
\hline
\end{tabular}
}
}
\end{center}
\end{table}

\begin{table}[!h]
\begin{center}
\caption{Possible preference relations after the first piece of preference information\label{poss_first}}
\subtable[Local]{%
\resizebox{0.25\textwidth}{!}{
\begin{tabular}{ccccccccc}
\hline
    &  $\mathbf{s_1}$  &  $\mathbf{s_2}$ &  $\mathbf{s_3}$ &  $\mathbf{s_4}$ &  $\mathbf{s_5}$ &  $\mathbf{s_6}$ &  $\mathbf{s_7}$ &  $\mathbf{s_8}$ \\ 
\hline
 $\mathbf{s_1}$  &        0 &          1 &          0 &          1 &          1 &          1 &          0 &          1 \\

 $\mathbf{s_2}$  &        0 &          0 &          0 &          0 &          0 &          1 &          0 &          0 \\

 $\mathbf{s_3}$  &        1 &          1 &          0 &          1 &          1 &          1 &          1 &          1 \\

 $\mathbf{s_4}$  &        1 &          1 &          0 &          0 &          1 &          1 &          1 &          1 \\

 $\mathbf{s_5}$  &        1 &          1 &          1 &          1 &          0 &          1 &          0 &          0 \\

 $\mathbf{s_6}$  &        0 &          1 &          0 &          1 &          0 &          0 &          0 &          0 \\

 $\mathbf{s_7}$  &        1 &          1 &          1 &          1 &          1 &          1 &          0 &          1 \\

 $\mathbf{s_8}$  &        1 &          1 &          1 &          1 &          1 &          1 &          0 &          0 \\    

\hline
\end{tabular}
}
}
\subtable[PROMETHEE II]{%
\resizebox{0.25\textwidth}{!}{
\begin{tabular}{ccccccccc}
\hline
    &  $\mathbf{s_1}$  &  $\mathbf{s_2}$ &  $\mathbf{s_3}$ &  $\mathbf{s_4}$ &  $\mathbf{s_5}$ &  $\mathbf{s_6}$ &  $\mathbf{s_7}$ &  $\mathbf{s_8}$ \\ 
\hline  
 $\mathbf{s_1}$  &        0 &          1 &          1 &          1 &          1 &          1 &          0 &          1 \\

 $\mathbf{s_2}$  &        1 &          0 &          1 &          1 &          0 &          1 &          0 &          1 \\

 $\mathbf{s_3}$  &        1 &          1 &          0 &          1 &          1 &          1 &          1 &          1 \\

 $\mathbf{s_4}$  &        1 &          1 &          0 &          0 &          1 &          1 &          1 &          1 \\

 $\mathbf{s_5}$  &        1 &          1 &          1 &          1 &          0 &          1 &          0 &          1 \\

 $\mathbf{s_6}$  &        1 &          1 &          1 &          1 &          0 &          0 &          0 &          1 \\

 $\mathbf{s_7}$  &        1 &          1 &          1 &          1 &          1 &          1 &          0 &          1 \\

 $\mathbf{s_8}$  &        1 &          1 &          1 &          1 &          1 &          1 &          0 &          0 \\
 
\hline
\end{tabular}
}
}
\subtable[PROMETHEE I]{%
\resizebox{0.25\textwidth}{!}{
\begin{tabular}{ccccccccc}
\hline
    &  $\mathbf{s_1}$  &  $\mathbf{s_2}$ &  $\mathbf{s_3}$ &  $\mathbf{s_4}$ &  $\mathbf{s_5}$ &  $\mathbf{s_6}$ &  $\mathbf{s_7}$ &  $\mathbf{s_8}$ \\ 
\hline
 $\mathbf{s_1}$  &  0 &          1 &          1 &          1 &          1 &          1 &          0 &          1 \\

 $\mathbf{s_2}$  &        0 &          0 &          0 &          1 &          0 &          1 &          0 &          0 \\

 $\mathbf{s_3}$  &        1 &          1 &          0 &          1 &          1 &          1 &          1 &          1 \\

 $\mathbf{s_4}$  &        1 &          1 &          0 &          0 &          1 &          1 &          1 &          1 \\
 
 $\mathbf{s_5}$  &        1 &          1 &          1 &          1 &          0 &          1 &          0 &          1 \\

 $\mathbf{s_6}$  &        1 &          1 &          1 &          1 &          0 &          0 &          0 &          0 \\

 $\mathbf{s_7}$  &        1 &          1 &          1 &          1 &          1 &          1 &          0 &          1 \\

 $\mathbf{s_8}$  &        1 &          1 &          1 &          1 &          1 &          1 &          0 &          0 \\      
\hline
\end{tabular}
}
}
\end{center}
\end{table}

\noindent Looking at Tables \ref{first_necessary}, we underline that $s_{7}$, $s_3$ and $s_{5}$ are surely the best among the eight students considered. In fact, $s_{7}$ is necessarily preferred to five out of the other seven students both locally and considering the bipolar PROMETHEE II method and, at the same time, (s)he is the only student being necessarily preferred to some other student using the bipolar PROMETHEE I method. $s_{3}$ is necessarily preferred to four out of the other seven students locally, and (s)he is necessarily preferred to $s_4$ considering the bipolar PROMETHEE II method. At the same time, (s)he is locally possibly preferred to $s_7$ (see Table \ref{poss_first}). $s_{5}$ is necessarily preferred to $s_2$ and $s_{6}$ considering the bipolar PROMETHEE II method. In order to get a more insight on the problem at hand, we suggest to the Dean to provide other information (s)he is sure about. For this reason, the Dean states that, locally, $s_{2}$ is preferred to $s_6$ and $s_{8}$ is preferred to $s_{1}$.

\begin{table}[!h]
\begin{center}
\caption{Necessary preference relations after the second piece of preference information\label{nec_second}}
\subtable[Local]{%
\resizebox{0.25\textwidth}{!}{
\begin{tabular}{ccccccccc}
\hline
    &  $\mathbf{s_1}$  &  $\mathbf{s_2}$ &  $\mathbf{s_3}$ &  $\mathbf{s_4}$ &  $\mathbf{s_5}$ &  $\mathbf{s_6}$ &  $\mathbf{s_7}$ &  $\mathbf{s_8}$ \\ 
\hline
 $\mathbf{s_1}$  &         0 &          1 &          0 &          0 &          0 &          1 &          0 &          0 \\

 $\mathbf{s_2}$  &         0 &          0 &          0 &          0 &          0 &          \cellcolor{yellow}{1} &          0 &          0 \\

 $\mathbf{s_3}$  &         1 &          1 &          0 &          1 &          \cellcolor{yellow}{1} &          1 &          0 &          \cellcolor{yellow}{1} \\

 $\mathbf{s_4}$  &         \cellcolor{yellow}{1} &          1 &          0 &          0 &          0 &          0 &          0 &          0 \\

 $\mathbf{s_5}$  &         0 &          1 &          0 &          0 &          0 &          1 &          0 &          0 \\

 $\mathbf{s_6}$  &         0 &          0 &          0 &          0 &          0 &          0 &          0 &          0 \\

 $\mathbf{s_7}$  &         1 &          1 &          0 &          \cellcolor{yellow}{1} &          1 &          1 &          0 &          1 \\

 $\mathbf{s_8}$  &         \cellcolor{yellow}{1} &          1 &          0 &          0 &          1 &          1 &          0 &          0 \\

\hline
\end{tabular}
}
}
\subtable[PROMETHEE II]{%
\resizebox{0.25\textwidth}{!}{
\begin{tabular}{ccccccccc}
\hline
    &  $\mathbf{s_1}$  &  $\mathbf{s_2}$ &  $\mathbf{s_3}$ &  $\mathbf{s_4}$ &  $\mathbf{s_5}$ &  $\mathbf{s_6}$ &  $\mathbf{s_7}$ &  $\mathbf{s_8}$ \\ 
\hline
 $\mathbf{s_1}$  &   0 &          0 &          0 &          0 &          0 &          0 &          0 &          0 \\

 $\mathbf{s_2}$  &        0 &          0 &          0 &          0 &          0 &          0 &          0 &          0 \\

 $\mathbf{s_3}$  &        0 &          0 &          0 &          1 &          0 &          0 &          0 &          0 \\

 $\mathbf{s_4}$  &        0 &          0 &          0 &          0 &          0 &          0 &          0 &          0 \\

 $\mathbf{s_5}$  &        0 &          1 &          0 &          0 &          0 &          1 &          0 &          0 \\

 $\mathbf{s_6}$  &        0 &          0 &          0 &          0 &          0 &          0 &          0 &          0 \\
 
 $\mathbf{s_7}$  &        1 &          1 &          0 &          \cellcolor{yellow}{1} &          1 &          1 &          0 &          1 \\

 $\mathbf{s_8}$  &        0 &          0 &          0 &          0 &          0 &          0 &          0 &          0 \\     
 
\hline
\end{tabular}
}
}
\subtable[PROMETHEE I]{%
\resizebox{0.25\textwidth}{!}{
\begin{tabular}{ccccccccc}
\hline
    &  $\mathbf{s_1}$  &  $\mathbf{s_2}$ &  $\mathbf{s_3}$ &  $\mathbf{s_4}$ &  $\mathbf{s_5}$ &  $\mathbf{s_6}$ &  $\mathbf{s_7}$ &  $\mathbf{s_8}$ \\ 
\hline
 $\mathbf{s_1}$  &        0 &          0 &          0 &          0 &          0 &          0 &          0 &          0 \\

 $\mathbf{s_2}$  &        0 &          0 &          0 &          0 &          0 &          0 &          0 &          0 \\

 $\mathbf{s_3}$  &        0 &          0 &          0 &          0 &          0 &          0 &          0 &          0 \\

 $\mathbf{s_4}$  &        0 &          0 &          0 &          0 &          0 &          0 &          0 &          0 \\

 $\mathbf{s_5}$  &        0 &          0 &          0 &          0 &          0 &          0 &          0 &          0 \\

 $\mathbf{s_6}$  &        0 &          0 &          0 &          0 &          0 &          0 &          0 &          0 \\

 $\mathbf{s_7}$  &        1 &          1 &          0 &          \cellcolor{yellow}{1} &          0 &          0 &          0 &          \cellcolor{yellow}{1} \\

 $\mathbf{s_8}$  &        0 &          0 &          0 &          0 &          0 &          0 &          0 &          0 \\
\hline
\end{tabular}
}
}
\end{center}
\end{table}

\begin{table}[!h]
\begin{center}
\caption{Possible preference relations after the second piece of preference information\label{pos_second}}
\subtable[Local]{%
\resizebox{0.25\textwidth}{!}{
\begin{tabular}{ccccccccc}
\hline
    &  $\mathbf{s_1}$  &  $\mathbf{s_2}$ &  $\mathbf{s_3}$ &  $\mathbf{s_4}$ &  $\mathbf{s_5}$ &  $\mathbf{s_6}$ &  $\mathbf{s_7}$ &  $\mathbf{s_8}$ \\ 
\hline
 $\mathbf{s_1}$  &        0 &          1 &          0 &          \cellcolor{yellow}{0} &          1 &          1 &          0 &          \cellcolor{yellow}{0} \\

 $\mathbf{s_2}$  &        0 &          0 &          0 &          0 &          0 &          1 &          0 &          0 \\

 $\mathbf{s_3}$  &        1 &          1 &          0 &          1 &          1 &          1 &          1 &          1 \\

 $\mathbf{s_4}$  &        1 &          1 &          0 &          0 &          1 &          1 &          \cellcolor{yellow}{0} &          1 \\

 $\mathbf{s_5}$  &        1 &          1 &          \cellcolor{yellow}{0} &          1 &          0 &          1 &          0 &          0 \\

 $\mathbf{s_6}$  &        0 &          \cellcolor{yellow}{0} &          0 &          1 &          0 &          0 &          0 &          0 \\

 $\mathbf{s_7}$  &        1 &          1 &          1 &          1 &          1 &          1 &          0 &          1 \\

 $\mathbf{s_8}$  &        1 &          1 &          \cellcolor{yellow}{0} &          1 &          1 &          1 &          0 &          0 \\   

\hline
\end{tabular}
}
}
\subtable[PROMETHEE II]{%
\resizebox{0.25\textwidth}{!}{
\begin{tabular}{ccccccccc}
\hline
    &  $\mathbf{s_1}$  &  $\mathbf{s_2}$ &  $\mathbf{s_3}$ &  $\mathbf{s_4}$ &  $\mathbf{s_5}$ &  $\mathbf{s_6}$ &  $\mathbf{s_7}$ &  $\mathbf{s_8}$ \\ 
\hline  
 $\mathbf{s_1}$  &        0 &          1 &          1 &          1 &          1 &          1 &          0 &          1 \\

 $\mathbf{s_2}$  &        1 &          0 &          1 &          1 &          0 &          1 &          0 &          1 \\

 $\mathbf{s_3}$  &        1 &          1 &          0 &          1 &          1 &          1 &          1 &          1 \\

 $\mathbf{s_4}$  &        1 &          1 &          0 &          0 &          1 &          1 &          \cellcolor{yellow}{0} &          1 \\

 $\mathbf{s_5}$  &        1 &          1 &          1 &          1 &          0 &          1 &          0 &          1 \\

 $\mathbf{s_6}$  &        1 &          1 &          1 &          1 &          0 &          0 &          0 &          1 \\

 $\mathbf{s_7}$  &        1 &          1 &          1 &          1 &          1 &          1 &          0 &          1 \\

 $\mathbf{s_8}$  &        1 &          1 &          1 &          1 &          1 &          1 &          0 &          0 \\
 
\hline
\end{tabular}
}
}
\subtable[PROMETHEE I]{%
\resizebox{0.25\textwidth}{!}{
\begin{tabular}{ccccccccc}
\hline
    &  $\mathbf{s_1}$  &  $\mathbf{s_2}$ &  $\mathbf{s_3}$ &  $\mathbf{s_4}$ &  $\mathbf{s_5}$ &  $\mathbf{s_6}$ &  $\mathbf{s_7}$ &  $\mathbf{s_8}$ \\ 
\hline
 $\mathbf{s_1}$  &        0 &          1 &          \cellcolor{yellow}{0} &          1 &          1 &          1 &          0 &          1 \\

 $\mathbf{s_2}$  &        0 &          0 &          0 &          1 &          0 &          1 &          0 &          0 \\

 $\mathbf{s_3}$  &        1 &          1 &          0 &          1 &          1 &          1 &          1 &          1 \\

 $\mathbf{s_4}$  &        1 &          1 &          0 &          0 &          1 &          1 &          \cellcolor{yellow}{0} &          1 \\

 $\mathbf{s_5}$  &        1 &          1 &          1 &          1 &          0 &          1 &          0 &          1 \\

 $\mathbf{s_6}$  &        \cellcolor{yellow}{0} &          1 &          1 &          1 &          0 &          0 &          0 &          0 \\

 $\mathbf{s_7}$  &        1 &          1 &          1 &          1 &          1 &          1 &          0 &          1 \\

 $\mathbf{s_8}$  &        1 &          1 &          1 &          1 &          1 &          1 &          0 &          0 \\     
\hline
\end{tabular}
}
}
\end{center}
\end{table}

\noindent Translating these preference information using the constraints $Ch^{B}(P^{B}(2,6),\hat\mu)>0$ and $Ch^{B}(P^{B}(8,1),\hat\mu)>0$, and computing again the necessary and possible preference relations locally and considering both the bipolar PROMETHEE methods, we get the results shown in Tables \ref{nec_second} and \ref{pos_second}. In these Tables, yellow cells correspond to new information we have got using the second piece of information provided by the Dean. In particular, in Tables \ref{nec_second} the cell in correspondence of the pair of students $(s_i,s_j)$ is yellow colored if $s_i$ was not necessarily preferred to $s_j$ after the first iteration, but $s_{i}$ is necessarily preferred to $s_{j}$ after the second iteration; in Tables \ref{pos_second}, the cell in correspondence of the pair of students $(s_i,s_j)$ is yellow colored if $s_i$ was possibly preferred to $s_{j}$ after the first iteration but $s_{i}$ is not possibly preferred to $s_j$ after the second iteration anymore. Looking at Tables \ref{nec_second} and \ref{pos_second}, the Dean is addressed to consider $s_7$ as the best student. In fact, also if $s_7$ and $s_3$ are locally necessarily preferred to all other six considered students, $s_{7}$ is still the only one being necessarily preferred to someone else considering the bipolar PROMETHEE I method. Besides, looking at Tables \ref{pos_second}, we get that $s_{3}$ is the only student being possibly preferred to $s_{7}$ locally and with respect to PROMETHEE I and PROMETHEE II but, at the same time, everyone except $s_4$, is possibly preferred to $s_{3}$ considering the bipolar PROMETHEE I method while four students ($s_5$, $s_6$, $s_7$ and $s_8$) are possibly preferred to $s_{3}$ with respect to the bipolar PROMETHEE I method.

\section{Conclusions}
In this paper we proposed a generalization of the classical PROMETHEE methods. A basic assumption of PROMETHEE methods is the independence between criteria which implies that no interaction between criteria is considered. In this paper we developed a methodology permitting to take into account interaction between criteria (synergy, redundancy and antagonism effects) within PROMETHEE method by using the bipolar Choquet integral. In this way we obtained a new method called the Bipolar PROMETHEE method.\\
The Decision Maker (DM) can give directly the preferential parameters of the method; however, due to their great number, it is advisable using some indirect procedure to elicit the preferential parameters from some preference information provided by the DM. \\
Since, in general, there is more than one set of parameters compatible with these preference information, we proposed to use the Robust Ordinal Regression (ROR) to consider the whole family of compatible sets of preferential parameters. We believe that the proposed methodology can be successfully applied in many real world problems where interacting criteria have to be considered; besides, in a companion paper, we propose to apply the SMAA methodology to the classical and to the bipolar PROMETHEE methods (for a survey on SMAA methods see \cite{tervonen_figueira}).

\bibliographystyle{plain}
\bibliography{Bib_Integral}

\end{document}